\numberwithin{equation}{section}
\newtheorem{theorem}{Theorem}[section]
\newtheorem{lemma}[theorem]{Lemma}
\newtheorem{remark}{Remark}
\newtheorem{proposition}[theorem]{Proposition}
\newtheorem{definition}{Definition}[section]
\newtheorem*{remark*}{Remark}
\newcommand{\R}{\mathbb R}
\newcommand{\Z}{\mathbb Z}
\newcommand{\N}{\mathbb N}
\newcommand{\EE}{\mathbb E}
\newcommand{\PP}{\mathbb P}
\newcommand{\VV}{\mathbb V}
\newcommand{\oVV}{{\overline{\mathbb V}}}
\newcommand{\pVV}{\partial\mathbb V}
\newcommand{\pE}{\partial E}
\newcommand{\oE}{{\overline{E}}}
\newcommand{\cB}{\mathcal B}
\newcommand{\cE}{\mathcal E}
\newcommand{\cG}{\mathcal G}
\newcommand{\cK}{\mathcal K}
\newcommand{\cM}{\mathcal M}
\newcommand{\cN}{\mathcal N}
\newcommand{\cP}{\mathcal P}
\newcommand{\cV}{\mathcal V}
\newcommand{\cU}{\mathcal U}
\newcommand{\cY}{\mathcal Y}
\newcommand{\tprod}{{\textstyle \prod}}
\def\sqr{\vcenter{
         \hrule height.1mm
         \hbox{\vrule width.1mm height2.2mm\kern2.18mm\vrule width.1mm}
         \hrule height.1mm}}                  
\def\square{\ifmmode\sqr\else{$\sqr$\vskip 3mm}\fi}
\newcommand{\one}{{\bf 1}\hskip-.5mm} 
\newcommand{\uk}{\underline k}
\newcommand{\oV}{\overline V}
\newcommand{\us}{\underline s}
\newcommand{\ux}{\underline x}
\newcommand{\ttau}{\tilde \tau}
\newcommand{\tPP}{\widetilde\PP}
\newcommand{\tO}{\tilde O}
\renewcommand{\paragraph}[1]{\vspace{1mm}\noindent{\bf {#1} \hspace{1mm}}}
\newcommand{\nn}{\nonumber}
\DeclareMathOperator{\stat}{stat}
\definecolor{cmm}{rgb}{0,.6,0.4}
\definecolor{cmm'}{rgb}{.6,0,.4}
\begin{document}

\centerline{\Large Hidden temperature in the KMP model}
\vspace {4mm}
\centerline{\large
  Anna de Masi
  \footnote{Gran Sasso Science Institute,
    {\href{anna.demasi@gmail.com}{\tt anna.demasi@gmail.com}},
    {\href{https://orcid.org/0000-0002-8154-2498}{orcid: 0000-0002-8154-2498}}}, 
  Pablo A. Ferrari
  \footnote{Universidad de Buenos Aires, corresponding author,
    {\href{pagfrr@gmail.com}{\tt pagfrr@gmail.com}},
    {\href{https://orcid.org/0000-0001-5395-6100}{orcid: 0000-0001-5395-6100}}},
  Davide Gabrielli
  \footnote{Università de L'Aquila,
    \href{davide.gabrielli@univaq.com}{\tt dvd.gabrielli@gmail.com},
  {\href{https://orcid.org/0000-0001-8776-6081}{orcid: 0000-0001-8776-6081}}
}
}
\vspace {4mm}







{
{\bf Abstract. } In the Kipnis Marchioro Presutti (KMP) model a positive energy $\zeta_i$ is associated with each vertex $i$ of a finite graph with a boundary. When a Poisson clock rings at an edge $ij$ with energies $\zeta_i,\zeta_j$, those values are substituted by $U(\zeta_i+\zeta_j)$ and $(1-U)(\zeta_i+\zeta_j)$, respectively, where $U$ is a uniform random variable in $(0,1)$. A value $T_j\ge0$ is fixed at each boundary vertex $j$. The dynamics is defined in such way that the resulting Markov process $\zeta(t)$, satisfies that $\zeta_j(t)$ is exponential with mean $T_j$, for each boundary vertex $j$, for all $t$.  We show that the invariant measure is the distribution of a vector $\zeta$ with coordinates $\zeta_i=T_iX_i$, where $X_i$ are iid exponential$(1)$ random variables, the law of $T$ is the invariant measure for an opinion random averaging/gossip model with the same boundary conditions of $\zeta$, and the vectors $X$ and $T$ are independent. The result confirms a conjecture based on the large deviations of the model. When the graph is one-dimensional, we bound the correlations of the invariant measure and perform the hydrostatic limit. We show that the empirical measure of a configuration chosen with the invariant measure converges to the linear interpolation of the boundary values.}

\noindent{\sl Keywords:} Stochastic interacting systems, stationary non equilibrium states, open systems

\noindent{\sl MSC Classification:}{ 60K35, 82C22, 82C23}


\section{Introduction}

A major issue in non equilibrium statistical mechanics is the understanding of stationary non equilibrium states (SNS). Effective and interesting toy models of real systems are obtained considering multicomponent interacting stochastic systems. The irreversibility that generates the non equilibrium situation is created by putting the system in contact with external boundary sources with different features.

In the non equilibrium case we have a flow of mass, heat, charges and the classic well established Gibbsian equilibrium framework does not apply. In particular, typically long range correlations appear.

A SNS for an interacting stochastic system is therefore the invariant measure of a Markovian stochastic process having as state space a configuration of particles or energies located on the vertices of a typically large graph. The process is not reversible so that we obtain a model for non equilibrium.

The thermodynamic behavior of the system is obtained by considering a very large number of components and a scaling limit. The limiting behavior is given by macroscopic variables, the so called empirical measures. Then one studies fluctuations in the framework of large deviations.

An exact form of the large deviations rate functional for the empirical measure of the SNS has been obtained for only a few one dimensional boundary driven models. See for example \cite{MR2335699,MFT} for general reviews. The rate functional is not local due to the presence of long range correlations and it is obtained by the solution of a variational problem. Among the solvable models we have the exclusion process and the Kipnis-Marchioro-Presutti (KMP) process \cite{KMP,BGL}.

In the present paper we study the KMP process and show that the complex statistical structure of the stationary state is better understood introducing hidden variables. The KMP process describes the stochastic evolution of the energy associated to harmonic oscillators organized along a chain. Each oscillator $i$ is characterized by a positive energy $\zeta_i$ and nearest neighbors oscillators exchange their energies at random exponential times. When two oscillators exchange energy, they simply share uniformly the total energy of the two. Each boundary vertex $j$ updates its value to an exponential random variable of mean $T_j$, the boundary condition.

{
\centering
\includegraphics[width=.7\textwidth]{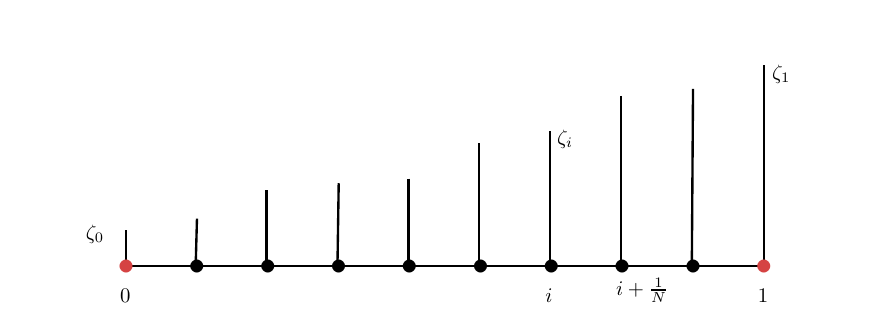}
\captionof{figure}{A one dimensional graph with vertex set $\{0,\frac1N,\dots,\frac{N-1}N,1\}$, boundary $\{0,1\}$ and nearest neighbor edges, and a KMP configuration $\zeta$. The boundary values $\zeta_0,\zeta_1$ are exponentially distributed with means $T_0,T_1$, respectively.\label{graph1} }\vspace{1mm}
}

It is instructive to recall the large deviations rate functional of the rescaled one dimensional boundary driven KMP model \cite{BGL}.
Consider the system in the interval $[0,1]$ and call $\rho(x)$ the macroscopic density of energy at the point $x\in [0,1]$. The variables $\zeta_i$ are associated to a lattice with $N$ points and mesh $1/N$. The two external sources at temperatures respectively $T_{\pm}$ are located at the right and left boundary point of the interval. Without loss of generality we assume $T_0=T_-<T_+=T_1$. See Fig.\/\ref{graph1}. We have the following large deviation principle:
\begin{equation}
\mathbb P_{\textrm{stat}}\left(\pi_N(\zeta)\sim \rho(x)dx\right)\simeq e^{-NV(\rho)}\,,
\end{equation}
where $\mathbb P_{\textrm{stat}}$ denotes the law of the stationary state, $\pi_N$ is the empirical measure (i.e. the macroscopic variable associated to the microscopic configuration of energies $\zeta$) and $V$ is the large deviations rate functional.

The variational expression of $V$ is given by
\begin{equation}\label{quasipot-kmp}
V(\rho)= \inf_{\tau}\int_0^1\left[\frac{\rho(x)}{\tau(x)}-1 -\log\frac{\rho(x)}{\tau(x)}-\log \frac{\tau'(x)}{T_+-T_-}\right] dx\,,
\end{equation}
where the infimum is over monotone increasing differentiable functions $\tau$ that satisfy the boundary conditions $\tau(0)=T_-$ and $\tau(1)=T_+$.

The origin of non locality of the rate functional and therefore of the long range correlations of the SNS is generated by the minimization over the function $\tau$. The minimal $\tau$ solves the Euler Lagrange equation that is a non linear differential equation depending on $\rho$.

Formula \eqref{quasipot-kmp} suggests that the statistical properties of the SNS may be better understood introducing and observing new hidden random variables $T$ whose macroscopic observable is  $\tau$. Indeed the joint functional on $(\rho,\tau)$ on the right hand side of \eqref{quasipot-kmp} may be interpreted as a rate functional of a joint distribution of variables $(\zeta, T)$ and the minimization procedure becomes then naturally the contraction principle in action. The contraction principle gives the rate functional on one single observable minimizing over the possible values of the other one in the joint rate functional.

This possible interpretation of \eqref{quasipot-kmp} has been proposed in \cite{Bertini_2007}. Indeed  in \cite{Bertini_2007} it has been observed that the joint functional on the right hand side of \eqref{quasipot-kmp} may be interpreted as the rate functional for the macroscopic observables associated to the random variables $(\zeta, T)$ when $T$ are the order statistics of uniform random variables in $[T_-,T_+]$ and, conditioned to the values of the variables $T$, the variables $\zeta$ are independent exponential random variables with average respectively $T$. This simple measure is however not the distribution of the SNS: an explicit computation was possible only in the case of one single oscillator giving as law of the single $T$ variable the arc sine law.

In this paper we partially answer the unsolved issues in \cite{Bertini_2007}. We consider a finite graph, and introduce an interacting stochastic dynamic of the variables $T_i$ coupled to the energies $\zeta_i$ of the boundary driven KMP process, $i$ denotes a vertex of the graph. The variable $T_i$ has a natural interpretation as the temperature associated to the oscillator $i$. When two oscillators interact exchanging their energies $\zeta$, at the same time the two oscillators thermalise and the variables $T$ describing the temperatures assume the same value, suitably distributed between the two values before thermalization. The evolution of the joint process $(T(t),\zeta(t))$ is described by introducing a KMP process  $X(t)$, with identical boundary conditions $T_j=1$, for each boundary vertex $j$. If $X(0)$ is distributed with the equilibrium invariant measure and $X(0)$ is independent of $T(0)$, then $X(t)$ and $T(t)$ are independent for all $t$, and the marginal evolution of $T(t)$ coincides with the evolution of an auxiliary Markov 
process $O(t)$. This is the content of Theorem \ref{T2}  which is one of our main results.
We then define $\zeta_i(t) = O_i(t)\,X_i(t)$, for all vertices $i$ of the graph. 
 We prove that  the invariant measure of the boundary driven KMP process $\zeta(t)$ is a mixture of exponential distributions whose average values are distributed according to the unique invariant measure of the $O(t)$ process; this is the content of Theorem \ref{teo3}, our main result. 
 
Thus in order to characterize the invariant measure of the boundary driven KMP process we have to study the invariant measure of the $O$ process.
 
The process $O$ fits in the class of opinion models: $O_i(t)$ is the opinion of the individual at the site $i$ at time $t$; boundary individuals have
fixed opinions,  $O_j(t)\equiv T_j$ for all $t$, if $j$ is in the boundary.  At unitary rate pairs of neighboring individuals commit to assume a common opinion, uniformly distributed between the two opinions before the interaction. Internal individuals  adopt the committed opinion, but boundary individuals keep their value disregarding the commitment.

 In the case of one single individual, the invariant measure of the opinion model can be exactly computed and coincides with the arc sine distribution.
 This gives a dynamic perspective to the computations in \cite{Bertini_2007}.

When there are $N$ oscillators in the segment $[0,1]$ with nearest neighbor interactions, we show in Proposition \ref{prop2.7} that the correlations behave as $1/N^2$ and show  in Theorem \ref{ihydr} that the hydrostatic limit for the law of the empirical measure of the opinion model is the delta measure concentrated on the linear interpolation of the boundary temperatures.

Using a coupling between a continuous and a discrete KMP process we extend the results to the discrete setting.  

An interesting problem is to investigate if an enlargement of the state space  is useful and interesting also in other solvable SNS like for example the boundary driven exclusion process. In this case there is a formula similar to \eqref{quasipot-kmp} but the variational problem involves a supremum and not an infimum. Consequently there is not a direct interpretation in terms of the contraction principle. We expect however that the same functional could be written equivalently as an infimum with a possible interpretation as a contraction principle. Transformations of this type occur in \cite{Bertini_2007,EnauD} and are suggested by the results in \cite{DEL}. 
A result of this type has been obtained for generalized zero range dynamics in \cite{Har,modena}, and for symmetric exclusion process in \cite{fgc2023}.

In Section \ref{results} we give the definitions and state the main results and at the end of the Section we give an outline of the paper.

\section{Definitions and results}
\label{results}

We consider an oriented graph $(\oVV , \oE)$, where $\oVV$ is a finite set of vertices, $\oE\subset\{ij:i,j\in \oVV\}$ is the set of oriented edges. The set of vertices is partitioned in internal and boundary, denoted by $\VV$ and $\pVV$; we have $\oVV=\VV\cup\pVV$. The set of edges with both extremes in $\VV$ are called internal, and denoted $E$. The set of vertices with one extreme in $\pVV$ is denoted $\pE$. There are no edges with both extremes in the boundary. We have $\oE = E \cup \pE$. Furthermore for each pair of vertices $i,j\in \oVV$ there is at most one edge between them, so that if $ij\in \oE$ then $ji\not\in \oE$. Boundary edges are always oriented towards the boundary vertex: if $ij\in \pE$ then $j\in \partial \VV$ and $i\in \VV$. A natural case is when the set of vertices is totally ordered and then the choice among the two possible orientation of an edge is determined by the order so that
$ij\in \oE$ implies $i<j$. In this case we may label the vertex at the origin by $0'$ and use the convention $0'>1$.

Each boundary vertex $j\in\pVV$ has an associated value $T_j$ a  fixed positive real number which we call temperature,  we denote by $T_{\pVV}=\{T_j,j\in \pVV\}$.

The boundary driven  Kipnis Marchioro Presutti model (KMP) \cite{KMP} is a Markov process $\zeta(t)=\{\zeta_i(t), i\in {\oVV}\}\in \mathbb R_+^{\oVV}$ with boundary temperatures $T_{\pVV}\in \mathbb R_+^{\pVV}$. To describe it, we use the Harris graphical construction. Associate a rate one marked Poisson process to each edge $ij\in \oE$. Call ``clock rings'' the events of the Poisson processes. The marks are iid Uniform random variables in $[0,1]$, and iid exponential random variables of rate 1; the exponential random variables are associated only to boundary edges.  When the clock rings at edge $ij$, update the current energy configuration $\zeta$ to an updated configuration $\zeta'$, where $\zeta'_\ell=\zeta_\ell$ for any $\ell\neq i,j$, and, calling $U$ the associated uniform mark, set $\zeta'_i=U(\zeta_i+\zeta_j)$, because $i\in\VV$ always, and set $\zeta'_j=(1-U)(\zeta_i+\zeta_j)$, if $j\in\VV$,  otherwise (if $j\in\pVV$), set $\zeta'_j= T_jB$, where $B$ is the associated exponential mark, so that $T_jB$ is exponential with mean $T_j$. The generator of the KMP process with boundary condition $T_{\pVV}$ is
\begin{align}
    L^\zeta f(\zeta)&:=\sum_{ij\in E}
                      \int_0^1du\,\bigl[f(H^\zeta_{ij;u}\zeta)-f(\zeta)\bigr]\label{Lzeta}\\
  &\qquad\qquad+\sum_{ij\in \partial E} \int_0^1du\int_0^{+\infty}db\,e^{-b}\,\bigl[f(H^\zeta_{ij;u,b}\zeta)-f(\zeta)\bigr],
\end{align}
where
\begin{align}
  (H^\zeta_{ij;u}\zeta)_{\ell}:=  \begin{cases}
    u(\zeta_i+\zeta_j)&\ell=i,\\
    (1-u)(\zeta_i+\zeta_j)&\ell=j,\\
    \zeta_\ell&\text{else},
  \end{cases}\qquad
    (H^\zeta_{ij;u,b}\zeta)_{\ell}:=  \begin{cases}
    u(\zeta_i+\zeta_j)&\ell=i,\\
    b\,T_j&\ell=j,\\
    \zeta_\ell&\text{else}.
  \end{cases}
\end{align}
\begin{remark*}[Boundary conditions in the KMP paper]\rm
  We remark that the dynamics at the boundary is slightly different from the original proposal in \cite{KMP} and is instead similar to the definition in \cite{Bertini_2007}. The generator in \cite{KMP} is given by
  \begin{align}
    \label{orig}
      L^\zeta f(\zeta)&:=\sum_{ij\in \oE}
                      \int_0^1du\,\bigl[f(H^\zeta_{ij;u}\zeta)-f(\zeta)\bigr]+\sum_{j\in \pVV} \int_0^{+\infty}db\,e^{-b}\,\bigl[f(H^\zeta_{j;b}\zeta)-f(\zeta)\bigr],
  \end{align}
where $(H^\zeta_{j;b}\zeta)_\ell=\zeta_\ell$ for $\ell\neq j$, and $(H^\zeta_{j;b}\zeta)_j=T_jb$. 
At rate 1 the edges $ij$ with $j$ in the boundary behave as the interior edges, updating both extremes to a fraction $u$ and $(1-u)$ of the sum, respectively. Furthermore, each boundary site $j$ at rate 1 updates its values to a fresh exponential random variable with mean $T_j$. Our approach works also for these boundary conditions. 
\end{remark*}

The main issue in our paper is to consider the KMP process as the marginal of a joint process in which both the oscillators and the temperatures evolve: this is explained after the next definition.\\

\begin{definition}\label{def2.1}{\bf {Joint KMP-temperature process}}

\rm We define a joint process $(X(t),T(t))_{t\ge 0}$ with $X(t)\in  \mathbb R_+^{\oVV}$ and $T(t)\in \mathbb R_+^{\oVV}$.
The variables $X:=\{X(t)\}$ evolves as a KMP process with constant (equal to 1) boundary  temperatures. The evolution of the $T:=\{T(t)\}$ variables  in the Harris graphical construction is defined by updating its values simultaneously at the Poisson events associated to the edge $ij$ to a configuration in which the temperatures in $i$ and $j$ assume an identical value suitable chosen  depending on the $X_i$ and $X_j$ variables. 
We denote by $L^{X,T}$  the generator of this process given by
	\begin{eqnarray}
	&&\hskip-2cm L^{X,T}f(X,T)=\sum_{ij\in E}\int_0^1 du \big[f\big(H_{i,j;u}(X,T)\big)- f(X,T)\big]\nn
	\\&&\hskip0.5cm +\sum_{ij\in \partial E}\int_0^\infty db\, e^{-b} \int_0^1du \big[f\big(H_{i,j;b,u}(X,T)\big)- f(X,T)\big],
	\label{gen}
	\end{eqnarray} 
where 
 \begin{align}\label{2.7a}
H_{i,j;u}(X,T)&:=
 \begin{cases}
    \big(u(X_i+X_{j}), \frac{X_i}{X_i+X_j}T_i+\frac{X_j}{X_i+X_j}T_j\big)  & \ell=i \\
   \big( (1-u)(X_i+X_{j}), \frac{X_i}{X_i+X_j}T_i+\frac{X_j}{X_i+X_j}T_j \big) & \ell=j \\   
   (X_\ell,T_\ell) &  \text{otherwise},
  \end{cases}
   \end{align}
    \begin{align}\label{2.7b}
H_{i,j;b,u}(X,T)&:=
 \begin{cases}
    \big(u(X_i+X_{j}), \frac{X_i}{X_i+X_j}T_i+\frac{X_j}{X_i+X_j}T_j\big)  & \ell=i \\
    (b,T_j)  & \ell=j\in \pVV\\
   (X_\ell,T_\ell) &  \text{otherwise}.
  \end{cases}
   \end{align}

Define the process $\zeta(t)$ by 
\begin{equation}
\zeta_i(t)=X_i(t)\,T_i(t),\qquad i\in \oVV.
\label{2.8}
\end{equation} 
\end{definition}
Note that according to the above definition $T_i(t)=T_i$ for any $t$ when $i\in \partial \VV$.
We will construct the above process in Section \ref{pron} where we prove 
our first result stated in the following Proposition.\\

\begin{proposition}
  \label{prop2.2}
 Let $(X,T,\zeta)$ be the processes of Definition \ref{def2.1}. Then, both marginals $X$ and $\zeta$ are KMP processes with generator \eqref{gen}, with boundary conditions identically equal to $1$ for $X$ and  $T_{\pVV}$ for $\zeta$.
\end{proposition}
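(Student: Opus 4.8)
The plan is to argue entirely at the level of the Harris graphical construction of Definition \ref{def2.1}, by showing that the two coordinate projections $(X,T)\mapsto X$ and $(X,T)\mapsto\zeta$, with $\zeta_i=X_iT_i$, intertwine the joint update maps $H_{i,j;u}$, $H_{i,j;b,u}$ of \eqref{2.7a}--\eqref{2.7b} with the KMP update maps $H^\zeta_{ij;u}$, $H^\zeta_{ij;u,b}$ appearing in the KMP generator \eqref{Lzeta}. Once such pointwise identities are established, the KMP (Markov) character of each marginal follows from Dynkin's lumpability criterion: if, for a projection $\pi$, the quantity $L^{X,T}(\phi\circ\pi)$ depends on $(X,T)$ only through $\pi(X,T)$ and equals $(L\phi)(\pi(X,T))$ for the relevant generator $L$, then $\pi$ applied to the process is itself Markov with generator $L$. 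Thus the proof reduces to two algebraic verifications.

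For the $X$ marginal this is immediate, since the $X$-components of both \eqref{2.7a} and \eqref{2.7b} do not involve $T$: at an interior edge one reads $X_i\mapsto u(X_i+X_j)$ and $X_j\mapsto(1-u)(X_i+X_j)$, which is $H^\zeta_{ij;u}$ applied to $X$, and at a boundary edge $X_i\mapsto u(X_i+X_j)$, $X_j\mapsto b$, which is the KMP boundary update with boundary temperature $1$. Hence $L^{X,T}g$ coincides with $L^\zeta g$ at boundary temperatures identically $1$ for every test function $g$ of $X$ alone.

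The substance is the $\zeta$ marginal, and it hinges on the fact that the committed temperature is the $X$-weighted average of the two incoming temperatures. At an interior edge, \eqref{2.7a} gives
\begin{equation*}
\zeta_i'=X_i'T_i'=u(X_i+X_j)\,\frac{X_iT_i+X_jT_j}{X_i+X_j}=u(X_iT_i+X_jT_j)=u(\zeta_i+\zeta_j),
\end{equation*}
and symmetrically $\zeta_j'=(1-u)(\zeta_i+\zeta_j)$, while $\zeta_\ell'=\zeta_\ell$ otherwise; this is exactly $H^\zeta_{ij;u}\zeta$. At a boundary edge $ij$ with $j\in\pVV$, the same computation yields $\zeta_i'=u(\zeta_i+\zeta_j)$, whereas $\zeta_j'=X_j'T_j'=b\,T_j$; this is exactly $H^\zeta_{ij;u,b}\zeta$ with boundary temperature $T_j$. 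The decisive point is that in every case the updated $\zeta$ depends on $(X,T)$ only through $\zeta$, so that $L^{X,T}$ applied to a function of $\zeta$ factors through $\zeta$ and reproduces $L^\zeta$ with boundary conditions $T_{\pVV}$.

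I expect no genuine obstacle beyond this bookkeeping; the single conceptual ingredient is the weighted-average form of the temperature update, which is precisely what makes $X_i'T_i'=u(\zeta_i+\zeta_j)$ hold. The one place warranting a line of care is the boundary randomization: I must check that the exponential mark reproduces the correct boundary law, namely that since $b$ is exponential of rate $1$ under $e^{-b}\,db$ in \eqref{gen}, the variable $\zeta_j'=b\,T_j$ is exponential of mean $T_j$, matching the KMP boundary rule.
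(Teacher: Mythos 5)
Your proposal is correct and is essentially the paper's own argument: the decisive step in both is the identity $X_i'T_i' = u(X_i+X_j)\,\frac{X_iT_i+X_jT_j}{X_i+X_j} = u(\zeta_i+\zeta_j)$ (together with $\zeta_j' = b\,T_j$ at boundary vertices), showing that the joint update maps \eqref{2.7a}--\eqref{2.7b} project onto the KMP update maps, with the $X$ marginal being immediate since its updates never involve $T$. The only difference is the wrapper used to conclude Markovianity: the paper verifies the identification pathwise within the Harris graphical construction (so that $\zeta(t)$ is literally a KMP process driven by the same marked Poisson process, a coupling exploited later), whereas you phrase it as generator intertwining plus a lumpability criterion; for this bounded-rate jump process both formalizations are valid and rest on the same computation.
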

We observe that the invariant and reversible measure for the process $X$ is  $\mu^X :=$  product of exponential random variables with mean 1.

{\bf Remark} The joint evolution of $(T,\zeta)$ can  be described as follows: the $\zeta$ marginal is the energy while the $T$ marginal is the hidden temperature. When two oscillators exchange energy the corresponding temperatures thermalise at a common value that is a deterministic function of energies and temperatures before the interaction.
Observe that the updating of the values of the temperatures at the sites $i$ and $j$  depends on the ratio $\frac{X_i}{X_i+X_j}$, if the variables $X_i$ and $X_j$ were exponential independent then this ratio has law uniform in $(0,1)$. With this in mind we introduce the following 
process.\\

\begin{definition}
\label{def2.3} \rm
We define a Markov  process $O:=(O(t))_{t\ge0}$, $O(t)\in \mathbb R_+^{\oVV}$, with boundary conditions $T_{\pVV}$ as follows. When the clock associated to $ij$ rings, $O_i$ is updated to $O'_i=U O_i + (1-U) O_j$, where $U$ is the Uniform mark associated to the Poisson ring; $O_j$ is updated to $O'_j=O'_i$ if $j$ is an interior vertex, otherwise $O_j=T_j$, that is, it keeps the boundary value. The generator of this process is
\begin{align}
L^Of(O)&:=\sum_{ij\in \oE}\int_0^1dv\, [f(H^O_{ij;v}O) - f(O)].
\label{genO}\\
 (H^O_{ij;v}O)_{\ell}&:=
  \begin{cases}
    vO_i+(1-v)O_j&\ell\in\{i,j\}\cap\VV,\\
    O_\ell&\text{else}. 
  \end{cases}\label{ho1}
\end{align}
\end{definition}

This process is in the class of the so-called opinion processes, in Section \ref{slap} we give an overview of the results on this subject. The process $O$ is a Markov process on the graph $(\oVV,\oE)$ with boundary conditions $T_{\pVV}$. Suppose individuals $i$ and $j$ have positive real opinions $O_i<O_j$ just before the clock rings for the edge $ij$. Then a random opinion $\tO$ uniformly distributed in $[O_i,O_j]$ is chosen and both $i$ and $j$ update their opinions to $\tO$, if $j$ is internal vertex; otherwise, only $i$ updates to $\tO$ while $j\in\pVV$ holds the boundary opinion $T_j$. For this reason we call $O$ an opinion model with stubborn or extremist agents, referring to the intransigent individuals at the boundary, as we have  $O_{\pVV}(t)= T_{\pVV}$ for all $t$. %

For any choice of $T_{\pVV}$, the $O$  process has a unique invariant measure that we call $\nu^{O}$ (the dependence on $T_{\pVV}$ is underlined). Indeed, the process is irreducible and the set $I^{\oVV}$ is invariant and attractive for the dynamics, where $I:=\bigl[\min_{i\in \pVV}T_i, \max_{i\in \pVV}T_i\bigr]$. We study some properties of $\nu^O$ in Section \ref{slap}.

Next Theorem, proved in Section \ref{invariant}, gives the relation between  the $(X,T)$ process and the opinion model of Definition \ref{def2.3}.\\

\begin{theorem}
  \label{T2}
   Consider the process $(X,T)$ of Definition \ref{def2.1} with $T(0)$ and $X(0)$  independent. Assume that $X(0)$ has law $\mu^X$ and that the law of $T(0)\in \mathbb R_+^{\oVV}\cap\{T_i(0)=T_i,\,\,\forall i\in \pVV\}$ is an arbitrary probability $\nu$. Then $T(t)$ and $X(t)$ are independent for each $t\ge0$, and the $T$ marginal process is distributed as the Markov process $O$ of Definition \ref{def2.3}, with initial distribution $\nu$, i.e.
  \begin{align}
  \label{mixt}
 (X(t),T(t))\text{ has law } \mu^X(d \underline x) \bigl[\nu e^{tL^{O}}\bigr](d \us).
\end{align}
\end{theorem}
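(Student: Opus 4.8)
The plan is to verify the claimed factorized law by checking that the product measure $\mu^X(d\underline x)[\nu e^{tL^O}](d\underline s)$ satisfies the forward (Kolmogorov) equation generated by $L^{X,T}$, with the correct initial condition, and then invoke uniqueness of the solution. Since the statement asserts a closed evolution for the pair at all times, it suffices to show the following generator identity at the level of the product structure: for test functions $f(X,T)$, the action of $L^{X,T}$ respects the factorization $\mu^X \otimes (\text{law of } T)$ in the sense that, when $X$ is integrated against its stationary product-exponential law $\mu^X$, the induced generator on the $T$-marginal coincides exactly with $L^O$. Concretely, I would establish
\begin{equation}\label{key-gen}
\int \mu^X(d\underline x)\, L^{X,T} f(\underline x, \underline s) = \int \mu^X(d\underline x)\, (L^O_{\underline s} f)(\underline x,\underline s),
\end{equation}
where $L^O$ acts only on the $T=\underline s$ variable, treating $\underline x$ as a spectator, together with the fact that $\mu^X$ is invariant for the $X$-marginal dynamics (Proposition \ref{prop2.2} plus the remark that $\mu^X$ is reversible for $X$).

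The crucial computational step is the single-edge interior term. Fix an interior edge $ij$ and a function $f$; the contribution to $L^{X,T}f$ involves $\int_0^1 du\, f\bigl(H_{ij;u}(X,T)\bigr)$, where by \eqref{2.7a} the new common temperature is $\frac{X_i}{X_i+X_j}T_i + \frac{X_j}{X_i+X_j}T_j$ and the new energies are $u(X_i+X_j)$, $(1-u)(X_i+X_j)$. The heart of the argument is the elementary distributional identity: if $X_i,X_j$ are independent $\mathrm{Exp}(1)$, then the ratio $V:=\frac{X_i}{X_i+X_j}$ is Uniform$(0,1)$ and is independent of the sum $S:=X_i+X_j$; moreover $(uS,(1-u)S)$ with $u$ uniform reproduces a pair whose coordinates, after integrating $S$ out against its $\mathrm{Gamma}(2,1)$ law, are again independent $\mathrm{Exp}(1)$. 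Thus integrating $X_i,X_j$ against $\mu^X$ converts the temperature-update weight $\frac{X_i}{X_i+X_j}$ into a uniform variable $v$, so that $\frac{X_i}{X_i+X_j}T_i+\frac{X_j}{X_i+X_j}T_j$ becomes $vT_i+(1-v)T_j$, which is precisely the opinion update $H^O_{ij;v}O$ in \eqref{ho1}; simultaneously the $X$-marginal of the update is absorbed into the invariance of $\mu^X$. This is exactly the heuristic flagged in the Remark preceding Definition \ref{def2.3}, and making it rigorous \emph{jointly} (the same uniform $v$ controls both the temperature mixing and the energy split) is the main obstacle.

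For the boundary term one uses \eqref{2.7b}: the update sets $(X_j,T_j)\mapsto(b,T_j)$ with $b\sim\mathrm{Exp}(1)$, which is stationary for $X_j$ and leaves $T_j\equiv T_j$ fixed, matching the stubborn-agent rule $O_j=T_j$ in Definition \ref{def2.3}; the $i$-coordinate is handled by the same ratio-to-uniform identity as above. Once \eqref{key-gen} and the $X$-invariance are in hand, I would argue as follows: write $\Phi_t(d\underline x,d\underline s)$ for the law of $(X(t),T(t))$ started from $\mu^X\otimes\nu$, and let $\Psi_t:=\mu^X\otimes(\nu e^{tL^O})$ be the proposed law. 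Both satisfy the same linear evolution equation $\partial_t\langle f,\cdot\rangle = \langle L^{X,T}f,\cdot\rangle$ with the same initial datum $\mu^X\otimes\nu$ — for $\Psi_t$ this is exactly the content of \eqref{key-gen} combined with stationarity of $\mu^X$ under the $X$-dynamics — and by uniqueness for the martingale problem / Kolmogorov equation on this (Feller) Markov process, $\Phi_t=\Psi_t$ for all $t\ge0$. This yields both the asserted independence of $X(t)$ and $T(t)$ and the identification of the $T$-marginal as $\nu e^{tL^O}$, which is \eqref{mixt}. The remaining routine points are measurability/integrability to justify interchanging $\int\mu^X$ with the generator and to legitimize the uniqueness step, which I would treat by standard arguments for conservative jump generators with bounded jump rates on the attractive invariant set.
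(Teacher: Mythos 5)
Your proposal is correct and is essentially the paper's own analytical proof of Theorem \ref{T2}: your key generator identity is, modulo Fubini, exactly the paper's identity \eqref{formulachiave}, which the paper establishes by the same unit-Jacobian change of variables (the ratio-to-uniform fact for iid exponentials, applied jointly to the energy split and the temperature mixing) on interior and boundary edges, and the conclusion is then drawn by the same Kolmogorov-equation-with-uniqueness argument. (The paper additionally gives a second, graphical proof via the Harris construction, which your route does not use.)
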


Next result is about the stationary non equilibrium measure of the boundary driven KMP process.

\smallskip
\begin{definition}
\label{defM}
\rm We denote by $\cM$ the set of measures on $\R_+^{\oVV}$ which are mixtures of independent exponentials. Namely,  $\mu\in\cM$ if there exists a probability measure $\nu$ on  $\R_+^{\oVV}$ such that
\begin{align}
  \mu(d\underline \zeta) = \int \nu(d\us) \prod_{i\in\oVV} \frac1{s_i} \, e^{-\zeta_i/s_i}\,d\zeta_i.
\end{align}
 $\nu$ is called parameter measure of $\mu$. 
 \end{definition}

Our main result (Theorem \ref{teo3} below) is that the process $\zeta$ preserves $\cM$ and that the unique invariant measure for $\zeta$ belongs to $\cM$. \\

\begin{theorem}[SNS in  KMP]
  \label{teo3}\
  \nopagebreak
  
\noindent 1) The KMP process $\zeta$ preserves the set $\mathcal M$, more precisely if the law of $\zeta(0)$ belongs to $\mathcal M$ with parameter measure $\nu$ then  $\mu^\zeta_t$,
the law of $\zeta (t)$, belongs to $\cM$ with parameter measure $\nu_t=\nu e^{tL^{O}}$.

\noindent 2) The invariant measure $\mu^\zeta$ for the KMP process with boundary condition $T_{\pVV}$ belongs to $\mathcal M$, with parameter measure $\nu^{O}$, the invariant probability of the opinion model:
  \begin{equation}\label{iz1}
  \mu^\zeta(d\underline \zeta)=\int \nu^{O}(d\us)
  \,\tprod_{i\in \oVV}\,\tfrac1{s_i}\,{e^{-\zeta_i/s_i}\,d\zeta_i}.
\end{equation}
3) The correlations of $\mu_t^\zeta$ are related to those of $\nu_t$ by
\begin{align}\label{32}
  \int\mu_t^\zeta(d\underline\zeta) \tprod_{i\in \oVV}\, \zeta_i^{k_i} = \int\nu_t(d\us)\, \tprod_{i\in \oVV}\,k_i!\, s_i^{k_i},\qquad k_i\in \N_0=\{0,1,\dots\},
\end{align}
which in the limit $t\to \infty$ gives
\begin{align}
\label{corr}
  \int\mu^\zeta(d\underline\zeta) \tprod_{i\in \oVV}\, \zeta_i^{k_i} = \int\nu^O(d\us)\, \tprod_{i\in \oVV}\,k_i!\, s_i^{k_i},\qquad k_i\in \N_0=\{0,1,\dots\}.
\end{align}
\end{theorem}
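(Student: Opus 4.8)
The plan rests on a single dictionary between the set $\cM$ and the joint process of Definition \ref{def2.1}. The observation I would isolate first is that a vector $\zeta$ has law in $\cM$ with parameter measure $\nu$ if and only if $\zeta\eqlaw X\cdot T$, the coordinatewise product $\zeta_i=X_iT_i$, where $X\sim\mu^X$ and $T\sim\nu$ are independent. Indeed, conditioning on $T=\us$ makes the coordinates $\zeta_i=s_iX_i$ independent exponentials of mean $s_i$, with density $\frac1{s_i}e^{-\zeta_i/s_i}$; integrating against $\nu(d\us)$ reproduces Definition \ref{defM} verbatim. This is exactly the product structure \eqref{2.8}, so the hidden-variable construction is tailor-made to track $\cM$, and moreover the map $\nu\mapsto\mu$ is injective on compactly supported parameter measures, since \eqref{32} recovers the joint moments of $\nu$ from those of $\mu$.

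To prove part 1) I would realize the dynamics on the joint process. I would start $(X,T)$ of Definition \ref{def2.1} with $X(0)\sim\mu^X$ and $T(0)\sim\nu$ independent, so that by the dictionary $\zeta(0)=X(0)\,T(0)$ has law in $\cM$ with parameter $\nu$. By Proposition \ref{prop2.2} the process $\zeta(t)=X(t)T(t)$ is the KMP process, hence its time-$t$ law is $\mu^\zeta_t$. Now I invoke Theorem \ref{T2}: at every time $t$ the variables $X(t)$ and $T(t)$ are independent with $X(t)\sim\mu^X$ and $T(t)\sim\nu_t=\nu e^{tL^{O}}$. Applying the dictionary in reverse then gives that $\mu^\zeta_t$ lies in $\cM$ with parameter $\nu_t$, which is the claim.

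Part 3) reduces to a one-line moment computation: since the conditional law of $\zeta_i(t)$ given $T(t)=\us$ is exponential of mean $s_i$ and the coordinates are conditionally independent, $\EE\prod_i\zeta_i(t)^{k_i}=\int\nu_t(d\us)\prod_i \EE[(s_iX_i)^{k_i}]=\int\nu_t(d\us)\prod_i k_i!\,s_i^{k_i}$, using the exponential moments $\EE[X_i^{k_i}]=k_i!$; this is \eqref{32}. Equation \eqref{corr} is then \eqref{32} evaluated at the stationary parameter $\nu_t\equiv\nu^{O}$, equivalently the $t\to\infty$ limit: here I would use ergodicity of $O$ together with the fact that $\nu^{O}$ is supported on the compact box $I^{\oVV}$ (so the moments $\prod_i s_i^{k_i}$ are bounded on the support), which upgrades the weak convergence $\nu_t\to\nu^{O}$ to convergence of these moments.

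For part 2), existence follows by taking $\nu=\nu^{O}$: then $\nu_t=\nu^{O}e^{tL^{O}}=\nu^{O}$ for all $t$ by invariance, so part 1) shows that the measure \eqref{iz1} is fixed by the $\zeta$-dynamics. The one genuinely separate point, and the main obstacle I anticipate, is \emph{uniqueness} of the invariant measure for $\zeta$, which is what licenses the phrase ``the invariant measure''. Part 1) together with the injectivity of $\nu\mapsto\mu$ already yields uniqueness within $\cM$, since invariance forces the parameter to satisfy $\nu_t=\nu$ for all $t$, hence to be $O$-invariant and thus equal to $\nu^{O}$; but excluding invariant measures outside $\cM$ requires an ergodicity argument for the KMP process itself. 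I would establish this from the boundary resets: each boundary edge refreshes its endpoint to an independent exponential, providing a Doeblin-type minorization that makes the finite-graph KMP semigroup asymptotically stable with a single invariant law, or alternatively I would couple two copies under the common Harris realization and show the energy difference is dissipated through the boundary. Granting this, the unique invariant measure must coincide with the explicit $\cM$-measure \eqref{iz1}, completing the proof.
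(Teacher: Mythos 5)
Your proposal is correct and follows essentially the same route as the paper: the product dictionary for $\cM$ (the paper's computation \eqref{mu2}), realization of $\zeta(t)=X(t)\,T(t)$ via Definition \ref{def2.1} and Proposition \ref{prop2.2}, Theorem \ref{T2} for the independence of $X(t)$ and $T(t)$ and the identification of the parameter $\nu_t=\nu e^{tL^{O}}$, and the exponential moment identity $\EE(Y^k)=k!\,\lambda^k$ for part 3. The only divergence is that you go beyond the paper on part 2: where the paper just says it ``follows directly from item 1)'' (existence, taking $\nu=\nu^{O}$), you correctly flag that calling $\mu^\zeta$ \emph{the} invariant measure requires uniqueness --- within $\cM$ via your injectivity observation plus uniqueness of $\nu^{O}$, and outside $\cM$ via the boundary-driven coupling/ergodicity argument you sketch, a point the paper leaves entirely implicit.
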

Theorem \ref{teo3} will be proved in Section \ref{prteo3}.

\medskip
Thus from \eqref{corr} in order to characterize the SNS of the KMP process, we need to understand the behavior of the correlations in the stationary measure for the opinion model. We have results in this direction in  one dimension, namely   we 
 consider the one-dimensional graph
\begin{equation}
  \label{odg}
  \begin{array}{l}
      \oVV:=
    \{0,\frac1N,\dots,\frac{N-1}N,1\},\quad \pVV=\{0,1\}\\[1mm]
    E:=\{\frac{i}{N}(\frac{i+1}N):i=1,\dots,N-1\}\cup\{\frac1N 0\},\\[1mm]
T_-=T_0\quad T_+=T_1,
  \end{array}
\end{equation}
and we consider the process $O$ of Definition \ref{def2.3} in this graph. Let $\nu^O$ be the invariant measure for this process and  call 
	\begin{equation}
	\label{2.17}
C^N_{k,\ell}:= \mathbb E_{\nu^O}(O_kO_\ell)-\mathbb E_{\nu^O}(O_k)\mathbb E_{\nu^O}(O_\ell)\,.
	\end{equation}

In Section \ref{S41} we prove the following\\
\begin{proposition} \label{prop2.7}[Bounds for the one dimensional correlations]
The correlations $C^N_{k,\ell}$ of the invariant measure $\nu^O$ of the opinion model in the one-dimensional graph \eqref{odg} satisfy the bound $C^N_{k,\ell}\leq \tilde C^N_{k,\ell}$, where
\begin{align}\label{ilectilde}
\tilde C_{k,\ell}:=
\begin{cases}
\frac{(T_+-T_-)^2}{N+1}\frac kN(1-\frac \ell N)\,,& k<\ell\\[1mm]
\frac{(T_+-T_-)^2}{N+1}\frac kN(1-\frac kN)+\frac{(T_+-T_-)^2}{2N(N+1)}& k=\ell\\
\end{cases}
\end{align}
\end{proposition}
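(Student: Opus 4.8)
The plan is to convert the stationarity identity $\EE_{\nu^O}[L^O f]=0$ into closed recursions for the first two moments of $\nu^O$, to read off an explicit candidate from the structure of those recursions, and finally to obtain the inequality by a discrete maximum principle.

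First I would take $f(O)=O_k$. Using \eqref{genO}, each edge incident to an interior vertex $k$ contributes $\tfrac12$ times a nearest-neighbour difference, so that $\EE_{\nu^O}[L^O O_k]=\tfrac12(\Delta m)_k$, where $m_k:=\EE_{\nu^O}[O_k]$ and $\Delta$ is the discrete Laplacian with Dirichlet data $m_0=T_-$, $m_N=T_+$; hence $m_k=T_-+(T_+-T_-)\tfrac kN$. Next I would take $f(O)=O_kO_\ell$ and write $S_{k,\ell}:=\EE_{\nu^O}[O_kO_\ell]$, $C_{k,\ell}:=S_{k,\ell}-m_km_\ell$. For $|k-\ell|\ge2$ no firing edge touches both $k$ and $\ell$, and one gets $(\Delta_k+\Delta_\ell)S_{k,\ell}=0$, where $\Delta_k,\Delta_\ell$ act on the two indices. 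The new phenomenon lives on the diagonal $\ell=k$ and the adjacent line $\ell=k+1$: when the edge joining the two relevant sites fires, both coordinates are replaced by the same value $vO_i+(1-v)O_j$, and integrating over $v\in(0,1)$ produces the extra contribution $\tfrac13(O_i-O_j)^2\ge0$. Setting $\delta:=(T_+-T_-)/N$ and passing to $C$, the harmonic part of $m_km_\ell$ cancels, and one is left with a closed linear system: $(\Delta_k+\Delta_\ell)C_{k,\ell}=0$ off the diagonal, two inhomogeneous equations on $\ell=k$ and $\ell=k+1$ whose source is a positive constant proportional to $\delta^2$, and the Dirichlet conditions $C_{0,\ell}=C_{k,N}=0$ coming from $O_0\equiv T_-$, $O_N\equiv T_+$. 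Here I would be careful that the boundary edges update only their interior endpoint, so that the conventions $S_{0,\cdot}=T_-m_\cdot$ and $S_{\cdot,N}=T_+m_\cdot$ make the interior recursions hold uniformly down to the sites next to the boundary.

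The candidate is then read off from the off-diagonal harmonicity. The Dirichlet Green's function $G_{k,\ell}:=k(N-\ell)/N$ (for $k\le\ell$) is annihilated by $\Delta_k+\Delta_\ell$ away from the diagonal, so I would seek a solution of the form $C_{k,\ell}=A\,G_{k,\ell}$ off the diagonal plus a constant bump $B$ on the diagonal. Substituting into the diagonal and adjacent equations collapses them to two scalar equations for $A,B$, whose solution is $A=\tfrac{N\delta^2}{N+1}$ and $B=\tfrac{N\delta^2}{2(N+1)}$; rewriting in terms of $T_\pm$ reproduces $\tilde C_{k,\ell}$ term by term, including the extra $\tfrac{(T_+-T_-)^2}{2N(N+1)}$ on the diagonal. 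Thus $\tilde C$ solves all the \emph{interior} recursions exactly.

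What remains is the inequality, and this is where the argument is genuinely an estimate rather than an identity. The constant-bump candidate does not vanish at the diagonal corners $(0,0)$ and $(N,N)$, where the true covariance is $0$ because $O_0,O_N$ are deterministic; there $\tilde C=B>0$. Hence $D:=\tilde C-C$ solves the \emph{homogeneous} system (the source matches) on the triangle $\{0\le k\le\ell\le N\}$, with boundary data $D=0$ on $\{k=0\}\cup\{\ell=N\}$ except $D=B>0$ at the two corners. The governing operator has positive diagonal and nonpositive off-diagonal entries and is weakly diagonally dominant (an M-matrix); a discrete maximum principle then yields $0\le D\le B$, and in particular $C_{k,\ell}\le\tilde C_{k,\ell}$.

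The main obstacle is precisely this last step, since the coupled operator is not the plain two-dimensional Laplacian: the diagonal and adjacent equations carry the modified coefficients produced by the simultaneous update (the factor $4$ in place of $2+2$, and the $\tfrac13$ weights), so one must confirm that the maximum principle still applies. I would do this either by checking the M-matrix structure and irreducibility directly, or, more transparently, by exhibiting the dual two-label process, two random walkers that interact on the diagonal and are absorbed at $\{0,N\}$, and representing $D$ as the expectation of its nonnegative boundary data. A secondary point to settle first is the uniform validity of the moment recursions at the vertices adjacent to the boundary, where the update rule of Definition \ref{def2.3} differs.
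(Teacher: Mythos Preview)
Your proposal is correct and follows essentially the same route as the paper: derive the closed linear system for the two-point function, produce the explicit candidate $\tilde C$ (equivalently $\tilde M$) that satisfies all interior equations but overshoots at the two diagonal corners, and then compare. The paper arrives at the equations via the classical duality with the absorbed discrete KMP rather than by computing $\EE_{\nu^O}[L^O f]=0$ directly, and for the final inequality it uses precisely your second option, namely the probabilistic representation $M^N_{k,\ell}=\EE_{k,\ell}\bigl(T_{y_1(\tau)}T_{y_2(\tau)}\bigr)$ and $\tilde M^N_{k,\ell}=\EE_{k,\ell}\bigl(F_N(y_1(\tau),y_2(\tau))\bigr)$ with $F_N\ge T_{\cdot}T_{\cdot}$ pointwise on the absorbing set, rather than an abstract M-matrix maximum principle; your two alternatives are of course equivalent here since the interior equations are convex averages. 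One small correction: the constant source you obtain after subtracting $m_km_\ell$ is not positive on both lines (it is $+\tfrac{1}{10}\delta^2$ on the sub-diagonal but $-\tfrac{1}{2}\delta^2$ on the diagonal), though this is irrelevant to the argument because $\tilde C$ is built to match the source exactly and the difference $D=\tilde C-C$ is homogeneous.
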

This Proposition is proved in Section \ref{pco}. The proof  uses the classical notion of duality \cite{liggett85} which relates the opinion model to a discrete KMP process,
 this is explained in Section \ref{S41}.

The bounds \eqref{ilectilde} are the key to show the one-dimensional hydrostatic limit. Let 
\begin{equation*} m(x):= T_-+x(T_+-T_-),\qquad x\in[0,1]\end{equation*} The following result is proved in Section \ref{S42}. \\
\begin{theorem}[One dimensional hydrostatic limit]
  \label{ihydr}
Consider  the process   $O(t)$ of Definition \ref{def2.3} on the one-dimensional graph \eqref{odg}, distributed with the invariant measure $\nu^O$, and denote by $\PP_{\nu^O}$ the associated probability. Then,  for any $\epsilon >0$ and for any continuous test function $\psi:[0,1]\to \mathbb R$ we have
\begin{equation}
\lim_{N\to +\infty}\PP_{\nu^O}\Bigl(\Bigl|\sum_{k=0}^N \psi(k/N)\, O^{\stat}_k-\int_{[0,1]}\psi(x)\, m(x) dx\Bigr|>\epsilon\Bigr)=0\,.
\end{equation}
\end{theorem}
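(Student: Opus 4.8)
The plan is to prove this law of large numbers by the first and second moment method. Writing $\overline O_N(\psi):=\frac1N\sum_{k=0}^N \psi(k/N)\,O^{\stat}_k$ for the (normalised) stationary empirical average, it suffices to show that $\EE_{\nu^O}[\overline O_N(\psi)]\to\int_0^1\psi(x)m(x)\,dx$ and that $\mathrm{Var}_{\nu^O}(\overline O_N(\psi))\to 0$. The conclusion then follows from Chebyshev's inequality: for $N$ large the mean is within $\epsilon/2$ of the target, so $\PP_{\nu^O}(|\overline O_N(\psi)-\int_0^1\psi m|>\epsilon)\le \PP_{\nu^O}(|\overline O_N(\psi)-\EE_{\nu^O}\overline O_N(\psi)|>\epsilon/2)\le 4\,\mathrm{Var}_{\nu^O}(\overline O_N(\psi))/\epsilon^2$.

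First I would identify the stationary mean exactly. Applying the generator \eqref{genO} to the coordinate function $O\mapsto O_k$ at an internal vertex $k$ and using $\int_0^1 v\,dv=\tfrac12$, the two incident edges contribute $\tfrac12(O_{k-1}-O_k)$ and $\tfrac12(O_{k+1}-O_k)$, so $L^O O_k=\tfrac12(O_{k-1}-2O_k+O_{k+1})$ is the discrete Laplacian. Stationarity gives $\EE_{\nu^O}[L^O O_k]=0$, i.e. $k\mapsto\EE_{\nu^O}[O_k]$ is discrete harmonic on the path with boundary values $\EE_{\nu^O}[O_0]=T_-$ and $\EE_{\nu^O}[O_N]=T_+$. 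In one dimension the unique such function is affine, whence $\EE_{\nu^O}[O_k]=m(k/N)$ exactly. Consequently $\EE_{\nu^O}[\overline O_N(\psi)]=\frac1N\sum_{k=0}^N\psi(k/N)\,m(k/N)$ is a Riemann sum of the continuous function $\psi\,m$ on $[0,1]$ and converges to $\int_0^1\psi(x)m(x)\,dx$.

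The second ingredient is the variance bound, and this is exactly where Proposition \ref{prop2.7} enters. Its diagonal case yields $\mathrm{Var}_{\nu^O}(O_k)=C^N_{k,k}\le \tilde C_{k,k}\le c\,(T_+-T_-)^2/N$ uniformly in $k$, for an absolute constant $c$ (using $\frac kN(1-\frac kN)\le\frac14$). To turn this one-sided estimate into control of all covariances I would apply Cauchy--Schwarz, $|C^N_{k,\ell}|\le\sqrt{C^N_{k,k}\,C^N_{\ell,\ell}}\le c\,(T_+-T_-)^2/N$, which is two-sided and spares any discussion of the sign of the correlations. Then
\begin{equation*}
\mathrm{Var}_{\nu^O}\big(\overline O_N(\psi)\big)=\frac1{N^2}\sum_{k,\ell=0}^N\psi(k/N)\,\psi(\ell/N)\,C^N_{k,\ell}\le \frac{\|\psi\|_\infty^2}{N^2}\sum_{k,\ell=0}^N\bigl|C^N_{k,\ell}\bigr|\le \frac{(N+1)^2}{N^2}\,\|\psi\|_\infty^2\,\frac{c\,(T_+-T_-)^2}{N},
\end{equation*}
which is $O(1/N)$ and hence tends to $0$. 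Combining with the mean computation and Chebyshev closes the argument.

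The proof is short because the essential difficulty — the long-range correlations of the non equilibrium stationary state — has already been absorbed into the $O(1/N)$ correlation decay of Proposition \ref{prop2.7}; here the only points requiring care are the passage from its one-sided estimate to a genuine bound on $|C^N_{k,\ell}|$ (handled by Cauchy--Schwarz, or alternatively by first establishing $C^N_{k,\ell}\ge 0$ and invoking the full off-diagonal bound \eqref{ilectilde}, which gives the same rate) together with the standard Riemann-sum and Chebyshev steps. I do not expect any serious obstacle beyond Proposition \ref{prop2.7} itself.
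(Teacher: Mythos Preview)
Your proof is correct and follows essentially the same approach as the paper: identify the stationary mean as the discrete-harmonic interpolation $m(k/N)$, bound the variance via Proposition~\ref{prop2.7}, and conclude by Chebyshev. The only minor difference is in handling the one-sidedness of the bound $C^N_{k,\ell}\le\tilde C^N_{k,\ell}$: the paper reduces to nonnegative test functions by writing $\psi=\psi^+-\psi^-$ and then applies the off-diagonal estimate \eqref{ilectilde} directly, whereas you use Cauchy--Schwarz on the diagonal bound to control $|C^N_{k,\ell}|$; both routes give the same $O(1/N)$ variance decay.
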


We come back to general graphs and describe some additional results for the opinion model. Using the graphical construction we construct a random configuration $O$ distributed with the invariant measure $\nu^O$, as a function of a family of random walks in space-time random environment. See Proposition \ref{invo}. Then we study the process $\eta(t)$ on the space $\{0,1\}^\oE$, defined as the indicator of disagreement edges in the opinion model by
\begin{align}
  \eta_{ij}(t):= \one\{O_i(t)\neq O_j(t)\}.  
\end{align}
In particular $\eta_{ij}=1$ if $j$ is a boundary vertex. The process $\eta(t)$, called disagreement process, has interesting properties. In particular, $(\eta_{ij}(t):ij\in E')$ is Markovian for any subset $E'\subset \oE$. This allows to consider sets $E'$ with only one edge and show that the probability under $\mu^\eta$, the invariant measure for the process $\eta(t)$, satisfies $\mu^\eta(\eta_{ij}=0) =1/(n+1)$, where $n$ is the number of edges sharing a vertex with $ij$. We give also a perfect simulation algorithm of $\mu^\eta$.

Finally we show that the invariant measure for the discrete boundary driven KMP process in the same graph as the continuous one, and with the same boundary conditions, is a mixture of geometric random variables, where the parameter measure $\nu^O$ is the same as the one for the continuous case. See Theorem \ref{tdkmp} in Section \ref{sec:discrete-KMP}. To show it we couple the continuous and discrete KMP using rate one Poisson processes. 

\smallskip

The paper is organized as follows:

\noindent In Section \ref{scoupling} we illustrate the basic coupled construction of the processes $(X,T,\zeta)$ and prove Proposition \ref{prop2.2}, and Theorems \ref{T2} and \ref{teo3}.

\noindent In Section \ref{slap} we discuss in detail the opinion model $O$ and review existent related opinion models. We discuss classical and almost surely duality, prove Proposition \ref{prop2.7} and Theorem \ref{ihydr}, and study some features of the disagreement process~$\eta$.

\noindent In Section \ref{sec:discrete-KMP} we discuss the discrete KMP process.

\section{Proofs} 

In this section we construct a process 
$$(X(t),T(t),\zeta(t))=(X_i(t),T_i(t),\zeta_i(t))_{i\in \oVV }$$ which has the property that considering any pair of variables (i.e $(X,T)$, $(X,\zeta)$ or $(T,\zeta)$), the joint process on the pair is Markovian and the third collection of variables is completely determined. Our preferred viewpoint is in term of $(X,T)$, which is determined by $X$ and the initial value $T(0)$.
We construct the process $(X(t),T(t))=(X_i(t),T_i(t))_{i\in \oVV }$, $t\ge0$ in function of the initial configuration and a family of marked Poisson processes. %

Let $\cN=\cup_{ij\in \oE}\cN_{ij}$, where $(\cN_{ij})_{ij\in \oE}$, are iid Poisson processes in $\R$, with mean measure $dt$. To each $\tau\in\cN$ we associate a \emph{mark} $U(\tau)$ uniform in $[0,1]$, and for $\tau\in\cN_{ij}$ with $j\in \pVV$, associate also a mean-1 exponential random variable $B(\tau)$. Assume that the $U$ and $B$ variables are mutually independent and independent of $\cN$.\footnote{meaning that we order the time events $\cN=\{\dots,\tau_{-1}<\tau_0<0<\tau_1<\dots\}$, consider sequences of iid Uniform$[0,1]$ random variables $U_k$, and Exponential$(1)$ random variables $B_k$, all independent and independent of $\cN$ and call $U(\tau_k)=U_k$, $B(\tau_k)=B_k$.   } Denote $(\cN,\cU,\cB)$ the family of marked Poisson processes.

The initial configuration is denoted by
\begin{gather}
  \label{ic0}
  (X(0),T(0))= (X_i(0),T_i(0))_{i\in\oVV }.
\end{gather}
Let $\tau\in \cN_{ij}$, assume $X(\tau-),T(\tau-)$ are defined, and denote
\begin{align}
  \label{tau4}
&U:=U(\tau),\quad B:=B(\tau),\quad  X := X(\tau-),\quad  T := T(\tau-),\quad V:= \frac{X_i}{X_i+X_j},
\end{align}
where we can ignore $B$ if $j\notin\pVV$.

For each $\ell\in\oVV $, update $X$ and $T$ by
\begin{align}\label{xit2}
  X_\ell(\tau)&:=
  \begin{cases}
    U(X_i+X_{j})  & \ell=i \\
    (1-U)(X_i+X_{j})  & \ell=j\text{ and } j\in \VV\\
    B  & \ell=j\text{ and } j\in \pVV\\
    X_\ell &  \text{otherwise},
  \end{cases}\\
  T_\ell(\tau)&:=
   \begin{cases}
     \displaystyle{ V T_i+ (1-V) T_j}& \ell\in\{i,j\} \cap\VV\\
        T_\ell  & \text{otherwise}.
  \end{cases}\label{tell1}
   \end{align}
Then, for $t\ge 0$,  define
\begin{align}
  \label{xttt}
  (X(t),T(t))&:=\bigl(X(\tau^t),T(\tau^t)\bigr), \\
  \tau^t&:=\sup\bigl( \cN\cap[0,t]\bigr), \label{tau1}
\end{align}
with the convention $\sup(\emptyset)=0$. The construction is performed recursively. Set $X(t)=X(0)$ for $t<\tau_1:=\min(\cN\cap\R^+)$, use the recipes \eqref{xit2}-\eqref{tell1} to evolve from $\tau_1-$ to $\tau_1$, use \eqref{xttt} for $t\in[\tau_1,\tau_2)$, where  $\tau_2:=\min(\cN\cap(\tau_1,\infty))$, and so on.
Finally, for $t\ge0$, define the $\zeta$ process by
\begin{align}
  \label{zetat1}
  \zeta_i(t) := X_i(t)\,T_i(t),\quad i\in\oVV .
\end{align}
When $j\in\pVV$, $\zeta_j(t)$ is a mean $T_j$ exponential random variable, it is updated to a fresh variable with the same law at each $\tau\in\cN_{ij}$, and it is never affected by the values of the other coordinates.

We have defined the process $(X,T,\zeta)$ as a function of $(X(0),T(0),\zeta(0))$ and $(\cN,\cU,\cB)$. We say that $(\cN,\cU,\cB)$ \emph{governs} the process $(X,T,\zeta)$.

The $(X,\zeta)$ marginal of the process $(X,T,\zeta)$ is a coupling between $X$, an equilibrium KMP with constant unitary boundary conditions, and $\zeta$, a KMP with general boundary conditions. The coupling is obtained by letting both marginals be governed by the same marked Poisson processes $(\cN,\cU,\cB)$. 

\subsection{Proof of Proposition \ref{prop2.2}}
\label{pron}

We use the previous construction of the process and recalling \eqref{tau4} we observe that, since $B$ is exponential with mean $1$, the $X$ marginal is Markov with generator \eqref{Lzeta} and $T_{\pVV}\equiv 1$. To treat the $\zeta$ marginal, let $\tau\in\cN_{ij}$, recall notation \eqref{tau4} and denote $\zeta_k=\zeta_k(\tau-)=X_kT_k$ for $k\in \oVV $. By \eqref{xit2} and \eqref{tell1}, each vertex $\ell\in\oVV $ is updated  to
\begin{align}
  \zeta_\ell(\tau)
  &=
    \begin{cases}
  \bigl( VT_i+(1-V)T_{j}\bigr) U (X_i+X_{j}) & \ell=i\\
 \bigl( VT_i+(1-V)T_{j}\bigr) (1-U) (X_i+X_{j}) & \ell=j\text{ and } j\in \VV\\
      BT_\ell& \ell=j\text{ and } j\in \pVV\\
 X_\ell T_\ell&\text{otherwise}
\end{cases}\label{ze1}\\[2mm]
&=
\begin{cases}
 U \bigl(X_i T_i+ X_{j}T_{j}\bigr) & \ell=i\\
(1-  U) \bigl(X_i T_i+ X_{j}T_{j}\bigr) & \ell=j\text{ and } j\in \VV\\
      BT_\ell& \ell=j\text{ and } j\in \pVV\\
  X_\ell T_\ell&\text{otherwise}.%
\end{cases}\label{ze2}
\end{align}
To conclude, observe that $X_iT_i=\zeta_i(\tau-)$. \qed

\subsection{Proof of Theorem \ref{T2}}
\label{invariant}

We provide two proofs. One based on the graphical construction and the other is an analytical computation.
\begin{proof}[Graphical proof of Theorem \ref{T2}]
Since  $\mu^X$ is reversible for the $X$ marginal process, we have that $X(t)$ has law $\mu^X$ for all $t$.

Let $\tau\in\cN_{ij}$ and recall the notation \eqref{tau4}. We claim that the random elements
\begin{align}
  \label{indep9}
  U,\quad B,\quad V,\quad   X_i+X_{j},\quad  (X_{k})_{k\in\oVV \setminus\{i,j\}}\quad
  \text{ are mutually independent}.
  \end{align}
Since $U$ and $B$ are independent fresh marks attached to $\tau$, we have that $(U,B)$ is independent of the remaining elements, as they are functions of $(X(t))_{t<\tau}$. Since $X_i$ and $X_j$ are iid exponential, $V$ has uniform distribution in $[0,1]$ and $V$ is independent of $X_i+X_{j}$. This and the fact that $X(\tau-)$ are iid exponential show \eqref{indep9}.

  In turn, \eqref{indep9} and \eqref{xit2} imply that $V(\tau)$ and $X(\tau)$ are independent, which by (our Markovian) construction  implies that $V(\tau)$ is independent of $(X(t))_{t\ge\tau}$. Let  $\tau,\ttau\in\cN$ such that $\ttau>t\ge \tau$; we have that $V(\ttau)$ is a function of $X(t)$ and $\cN\cap (t,\infty]$. Hence, $\{V(\tau):\tau\in\cN,\,\tau> t\}$ and  $\{V(\tau):\tau\in\cN,\,\tau\le t\}$ are independent, implying $\cV:=(V(\tau))_{\tau\in\cN}$ are iid uniform in $[0,1]$, and independent of $\cN$.

By definition \eqref{tell1}, the $T$ process is governed by the marked Poisson processes $(\cN,\cV)$ so that it is a Markov process and its law coincides with the opinion model $O$ with the same initial condition.

The independence of $X(t)$ and $T(t)$ follows by induction. Assume that $X(\tau-)$ and $T(\tau-)$ are independent. Use the definitions \eqref{xit2} and \eqref{tell1} and the independence \eqref{indep9} to see that also  $X(\tau)$ and $T(\tau)$ are independent. To conclude, use the definition \eqref{xttt}.
\end{proof}
\begin{proof}[Analytical proof of Theorem \ref{T2}]
We show that the time dependent probability measure $\mu^X(d \underline{x}) \nu_t(d\underline{s})=\mu^X(d \underline{x}) \nu e^{tL^O}(d\underline{s})$ with $\nu$ an arbitrary initial condition, solves the Kolmogorov equation for the coupled $(X,T)$ dynamics. Recalling the Definition \ref{gen} of the  coupled generator $L^{X,T}$,  
this is equivalent to show that for any function $f=f(\underline x, \underline s)$, 
$\underline x\in  \mathbb R_+^{\oVV}$,  $\underline s\in \mathbb R_+^{\oVV}$, we have
\begin{equation}\label{kolmo}
\frac{d}{d t}\left[\int \mu^X(d \underline{x}) \nu_t(d\underline{s}) f(\underline x, \underline s)\right]=
\int \mu^X(d \underline{x}) \nu_t(d\underline{s}) L^{X,T} f(\underline x, \underline s)\,.
\end{equation}
If we call $\tilde f =\tilde f(\underline s):=\int \mu^X(d \underline{x})f(\underline x, \underline s)$, the validity of the joined Kolmogorov equation \eqref{kolmo} follows by the identity
\begin{equation}\label{formulachiave}
\int \mu^X(d \underline{x})L^{X,T} f(\underline x, \underline s)=L^O \tilde f(\underline s)\,.
\end{equation}
Once  \eqref{formulachiave} is proved,  \eqref{kolmo} becomes
\begin{equation}\label{kolmoO}
\frac{d}{d t}\left[\int \nu_t(d\underline{s}) \tilde f(\underline s)\right]=
\int \nu_t(d\underline{s}) L^{O} \tilde f(\underline s)\,,
\end{equation}
which is satisfied since it is exactly the Kolmogorov equation for the opinion model.

\smallskip
We prove then \eqref{formulachiave}. For simplicity of notation we show the basic computations distinguishing the case of an internal edge and the case of a boundary edge and considering in both cases functions that depend just on the values of the variables on the two nodes belonging to the edge.

The first basic computation, related to the dynamics on internal edges, is the following.
Consider $ij\in E$, $f=f(x_i,x_j,s_i,s_j)$ and the following formula
\begin{align}
  &\int_{\mathbb R_+^2}e^{-(x_i+x_j)}dx_idx_j\, \int_0^1 du \notag \\
  &\times f\Big(u(x_i+x_j),\, (1-u)(x_i+x_j),\, \frac{s_ix_i}{x_i+x_j}+\frac{s_jx_j}{x_i+x_j},\, \frac{s_ix_i}{x_i+x_j}+\frac{s_jx_j}{x_i+x_j}\Big)\,.\label{contointerno}
\end{align}
We perform the change of variables whose Jacobian is unitary
\begin{equation}
\left\{
\begin{array}{l}
y_i=u(x_i+x_j)\,,\\
y_j=(1-u)(x_i+x_j)\,,\\
v=\frac{x_i}{x_i+x_j}\,,
\end{array}
\right.
\end{equation}
and \eqref{contointerno} becomes
\begin{align}
& \int_{\mathbb R_+^2}e^{-(y_i+y_j)}dy_idy_j \int_0^1 dv f\Big(y_i,\, y_j,\, s_iv+s_j(1-v),\, s_iv+s_j(1-v)\Big)\,\\
& =\int_0^1 dv \,\tilde f\Big(s_iv+s_j(1-v),\, s_iv+s_j(1-v)\Big)\,.
\end{align}

The second basic computation, related to the dynamics on boundary edges, is the following.
Consider $ij\in \partial E$, $f=f(x_i, x_j, s_i)$ and compute
\begin{equation}\label{contoesterno}
\int_{\mathbb R_+^2}e^{-(x_i+x_j)}dx_idx_j \int_0^{+\infty} e^{-b} db\int_0^1 du f\Big(u(x_i+x_j), b, \frac{s_ix_i}{x_i+x_j}+\frac{T_jx_j}{x_i+x_j}\Big)\,.
\end{equation}
We perform the change of variables whose Jacobian is unitary
\begin{equation}
\left\{
\begin{array}{l}
y_i=u(x_i+x_j)\,,\\
y_j=b\,,\\
B=(1-u)(x_i+x_j)\,,\\
v=\frac{x_i}{x_i+x_j}\,,
\end{array}
\right.
\end{equation}
and \eqref{contointerno} becomes
\begin{align}
& \int_{\mathbb R_+^2}e^{-(y_i+y_j)}dy_idy_j \int_0^{+\infty}e^{-B}dB\int_0^1 dv f\Big(y_i, y_j, s_iv+T_j(1-v) \Big)\,\\
& =\int_0^1 dv \tilde f\Big(s_iv+T_j(1-v)\Big)\,.
\end{align}
Using these two basic computations it is easy to deduce the validity of \eqref{formulachiave}.
\end{proof}

\subsection {Proof of Theorem \ref{teo3}}
\label{prteo3}

1) Assume the law $\mu$ of $\zeta(0)$ is in $ \mathcal M$ with parameter measure $\nu$. Then $\zeta(0)$ is equal in law to  the product $X(0)T(0)$, 
where $(X(0),T(0))$  has law $\mu^X(d\underline x)\nu(d \underline s)$. In fact $\zeta(0)=X(0)T(0)$ has distribution $\mu$ given on test functions $\varphi:\R^{\oVV}\to\R$ by
\begin{align}
\nonumber 
\int\mu(d \underline \zeta) \varphi(\zeta)
&= \int \nu(d\us)
\Bigl(\tprod_{i\in\oVV} \int  e^{-x_i} dx_i \Bigr) \,\varphi(\ux\circ\us)\\
&= \int \nu(d\us)
\Bigl(\tprod_{i\in\oVV} \int \frac1{s_i} e^{-\zeta_i/s_i} d \zeta_i \Bigr) \,\varphi(\underline \zeta)\,,
\label{mu2}
\end{align}
where $\ux:=(x_i)_{i\in\oVV}$ and $\ux\circ\us:=(x_is_i)_{i\in\oVV}$ is the Hadamard product of vectors. This means that $\mu\in \mathcal M$ with parameter measure $\nu$. The KMP process with initial distribution $\mu\in \mathcal M$ can be realized as the third component of the triple $(X^{\stat},T,\zeta)$ and for each $t$, $X^{\stat}(t)$ and $T(t)$ are independent. By Theorem \ref{T2}, 
the law of $T(t)$ is $\nu e^{tL^{O}}$. We conclude using the computation in \eqref{mu2}.

Item 2) follows directly from item 1).  Item  3) is immediate from \eqref{mixt} recalling that  $\mathbb E(Y^k)=k!\lambda^k$ if  $Y$ is an exponential random variable of mean $\lambda$. \qed

\section{Opinion models}
\label{slap}
In this section we study the opinion model $O(t)$  in the graph $(\oVV,\oE)$ with generator \eqref{genO} and fixed boundary conditions $T_{\pVV}$. We give a short background on opinion models. In Section \ref{S41} we give a bound for the correlations in the one dimensional lattice, and in  Section \ref{S42} we use it to deduce a linear hydrostatic behavior for the empirical measure associated to the invariant measure.

The remaining of this section describes additional results on the invariant measure for the process $O(t)$. In Section \ref{cwalk3} we study the evolution of the disagreement process, a process that indicates the edges whose vertices have different opinions. This process is Markov and behaves as an spiking neuron process, see Section \ref{S44}. We compute explicitly the density of pairs with disagreement opinions under the invariant measure $\nu^O$ and perform a perfect simulation of the invariant measure for the disagreement process.

\paragraph{Background} The process $O(t)$ is a variation of a big class of opinion processes, see the survey by Castellano, Fortunato and Loreto \cite{castellano2009}. Positive numbers representing opinions sit at the vertices of a graph. At interaction times of an edge, the involved individuals update their opinions to lower their opinion difference. In the Deffuant model, introduced in \cite{deffuant2000mixing}, the updating occurs only when the modulus of their current opinion difference does not superate a threshold parameter.
Extremists are individuals that do not update the new agreed value after the interaction. Weisbuch, Deffuant and Amblart \cite{weisbuch2005persuasion} include extremists, and show that the population splits in two, each piece taking the opinion of one of the extremists. Vazquez, Krapivsky and Redner \cite{vazquez-krapivsky-redner,zbMATH02010952} consider a population with three opinions left/center/right, where center interacts with left and right, but they do not interact between them. The final state is either dominated by centrists, or split population in leftists and rightist or all population goes to one of the extremes. Lanchier, \cite{zbMATH06235591}
and Lanchier and Li \cite{lanchier-li-2020} study consensus in Deffuant models in infinite graphs. Gantert, Heydenreich, Hirscher \cite{ghh2020} study local and global agreement in the Deffuant model. Hirscher \cite{Hirscher-2017} study Deffuant in infinite volume with extremists (called overlay determined individuals). Haggstrom \cite{hagsstrom-2012} studies phase transitions in the Deffuant model in $\Z$. G{\'o}mez-Serrano, Graham and Le Boudec \cite{zbMATH06092639} prove convergence of the empirical measure of the Deffuant model to a limit satisfying a differential equation, whose solutions concentrate around several finite opinions, and at last converge to consensus.  The opinion is a conserved quantity in the models of the above papers. Dag{\`e}s and Bruckstein \cite{db-2022} consider a complete graph, and at each interaction time update the involved vertices with two independent random variables, uniformly distributed between the current opinions; this is a non conservative dynamics.

In the gossip model, also called averaging process, the interacting vertices take the mean value of their opinions. Picci and Taylor \cite{picci-taylor2013} show convergence of the gossip model to consensus. The papers by Boyd, Ghosh, Prabhakar, Shah \cite{1498447,1638541} study the speed of convergence to the average, and the survey Shah \cite{zbMATH05659657} study computational aspects. The survey of Aldous and Lanoue \cite{zbMATH06050911} study the convergence towards the invariant configuration. Aldous \cite{zbMATH06216071} studies the asymptotics, correlations and duality for several models including averaging processes in general graphs. Caputo, Quattropani and Sau \cite{zbMATH07733575} study cutoff phenomena; Quattropani, Sau \cite{zbMATH07692286} study mixing and duality; Sau \cite{sau2024tiny} study fluctuations.

Our model is similar to the gossip/averaging process, but when two individuals interact, they choose a random value uniformly distributed between their current opinions, and both adopt that value. This is a non-conservative dynamics. We also have extremists at the boundary vertices, with different opinions. This implies that the invariant measure is non trivial, a fact also present in the gossip/averaging process, but not studied there, as far as we know.

\paragraph{Invariant measure in the single individual case}
Consider a single individual in contact with two external opinions $T_\pm$. We have $\VV=\{1\}$, $\partial \VV=\{0,2\}$ and a unique random variable $O_1$. We show that the unique invariant measure has support on $[T_-,T_+]$ and its density is given by
\begin{equation}\label{arcsin}
\rho(y)=\frac{1}{\pi\sqrt{(y-T_-)(T_+-y)}}\,, \qquad y\in[T_-,T_+],
\end{equation}
a linear transformation of a Beta$(\frac12,\frac12)$ distribution or Arcsin law.
The only oscillator performs a Markov chain that from a state $y$ jumps at rate $1$ to a point uniformly distributed in the interval $[T_-,y]$, and at the same rate to a point uniformly distributed in $[y,T_+]$. The invariant measure $\rho(y)dy$ of such process is characterized by the balance equation
\begin{equation}\label{balance}
2\rho(y)=\int_{T_-}^{y} \frac{d x}{T_+-x}+\int^{T_+}_{y} \frac{d x}{x-T_-}\,.
\end{equation}
Differentiating \eqref{balance} with respect to $y$ we get
$$
2\rho'(y)=\frac{1}{T_+-y}-\frac{1}{y-T_-}\,,
$$
which can be solved by separation of variables and has as unique positive normalized to one solution given in \eqref{arcsin}.

\subsection{Classical duality and correlations}
\label{S41}
In this subsection we introduce a classic duality relation, as in Liggett \cite{liggett85},  between the opinion process and the discrete KMP process introduced in \cite{KMP}. This will be useful to compute the correlations of the invariant measure for the opinion process, which in turn are used to show the hydrostatic limit.

We consider two versions of the discrete KMP process. The version with particles absorbed at the boundary will be dual of the opinion model. The boundary driven version will be coupled with the corresponding continuous one in Section \ref{sec:discrete-KMP}.

The discrete KMP process $K(t)$ with boundary conditions $T_{\pVV}$ is analogous to the original real valued KMP process with generator \eqref{Lzeta}, but with state space $(\mathbb N_0)^{\oVV}$ . When the Poisson event $\tau$ belongs to $\cN_{ij}$, for an internal $ij$, the value $K_i$ is updated to a new value $K'_i$ chosen uniformly in  $\{0,\dots,K_i+K_j\}$ and the value $K_j$ is updated to $K'_j=K_i+K_j-K'_i$. When $j$ is in the boundary, the value of $K_i$ is updated in the same way, but the value at the boundary $K_j$ is updated to a fresh geometric random variable in $\mathbb N_0=\{0,1,\dots\}$ with mean $T_j$. The generator of the discrete KMP with boundary conditions $T_{\pVV}$ is given by
\begin{align}
L^{K}f(\uk)&:=\sum_{ij\in \oE} \frac{1}{k_i+k_j+1}
\sum_{h=0}^{k_i+k_j} \Bigl( \one\{j\in\VV\}\, [f(H_{ij;h}(\uk))-f(\uk)]
\label{genK}  \\
&\qquad+  \one\{j\in\pVV\}\,
\sum_{b\ge0}  \bigl(\tfrac{T_j}{T_j+1}\bigr)^b
\tfrac{1}{T_j+1} [f(H_{ij;h,b}(\uk))-f(\uk)]\Bigr)
\,,\nonumber \\
\label{genK1}
(H_{ij;h}(\uk))_\ell&:=  \begin{cases}
h &\ell=i\\
k_i+k_j-h & \ell=j \\
k_\ell&\text{else},%
\end{cases}\\
(H_{ij;h,b}(\uk))_\ell&:= \begin{cases}
b&\ell=j\text{ and } j\in \pVV\\
(H_{ij;h}(\uk))_\ell&\text{else}.%
\end{cases}\label{ze3}
\end{align}

The discrete KMP process with absorbing boundary conditions has the same dynamics in the internal edges but particles are absorbed at the boundary vertices $\pVV$. The generator of the absorbed dynamics, that we denote by $L^K_0$, is
\begin{align}
L^{K}_0f(\uk)&:=\sum_{ij\in E} \frac{1}{k_i+k_j+1}
\sum_{h=0}^{k_i+k_j}  [f(H_{ij;h}(\uk))-f(\uk)]
\label{genK1-absorbed} \\
&\qquad+  \sum_{ij\in \partial E} \frac{1}{k_i+1}
\sum_{h=0}^{k_i}  [f(H_{ij;h}(\uk))-f(\uk)]\,.
\label{bo}
\end{align}

The absorbed discrete KMP defined in \eqref{genK1-absorbed} is slightly different from the one defined in \cite{KMP}, where for each boundary edge $ij\in \partial E$ at rate one all the particles $K_i$ that are at $i\in \VV$ jump to $j\in \partial \VV$ and stay there forever.

\paragraph{Classical Duality}
The next result shows that opinion model and the discrete KMP model absorbed at the boundary are dual of each other in the sense of Definition 3.1 in Liggett \cite{liggett85}.
\\
\begin{lemma} For any $O=(O_i)_{i\in \oVV}\in (\mathbb R^+)^{\oVV}$, such that $O_i=T_i$ when $i\in \partial \VV$, and for any $K=(K_i)_{i\in \oVV}\in (\mathbb \mathbb N_0)^{\oVV}$ we have for any $t\geq 0$
\begin{equation}\label{dualt}
\mathbb E_O\Bigl(\prod_{i\in \oVV}O_i(t)^{K_i}\Bigr)=\mathbb E_K\Bigl(\prod_{i\in \oVV}O_i^{K_i(t)}\Bigr)\,,
\end{equation}
where on the left hand side we have the expected value with respect to the opinion model $O(t)$ with initial condition $O$ and $K$ is fixed, while on the right hand side we have the expected value with respect to the absorbed discrete KMP process $K(t)$, defined via \eqref{genK1-absorbed}, with initial condition $K$ and $O$ is fixed.

\end{lemma}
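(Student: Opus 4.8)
The plan is to prove \eqref{dualt} by the classical route: exhibit the duality function
\[
D(O,K):=\tprod_{i\in\oVV}O_i^{K_i},
\]
and verify the \emph{generator} duality $\bigl(L^O D(\cdot,K)\bigr)(O)=\bigl(L^K_0 D(O,\cdot)\bigr)(K)$, where $L^O$ from \eqref{genO} acts on the opinion variable with $K$ frozen, and $L^K_0$ from \eqref{genK1-absorbed}--\eqref{bo} acts on the particle variable with $O$ frozen. The single analytic identity that will make the two sides coincide is the Beta integral
\[
\int_0^1\bigl(vx+(1-v)y\bigr)^n\,dv=\frac{1}{n+1}\sum_{h=0}^{n}x^h y^{n-h},\qquad n\in\N_0,
\]
which follows from the binomial expansion together with $\int_0^1 v^h(1-v)^{n-h}dv=\tfrac{h!\,(n-h)!}{(n+1)!}$.

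First I would treat an internal edge $ij\in E$. Since $H^O_{ij;v}$ sends both $O_i$ and $O_j$ to $vO_i+(1-v)O_j$, the $v$-average of $D(H^O_{ij;v}O,K)$ equals $\int_0^1(vO_i+(1-v)O_j)^{K_i+K_j}\,dv$ times the spectator factor $\tprod_{\ell\neq i,j}O_\ell^{K_\ell}$; by the Beta identity with $n=K_i+K_j$ this is $\tfrac{1}{K_i+K_j+1}\sum_{h=0}^{K_i+K_j}O_i^hO_j^{K_i+K_j-h}$ times the spectator factor. This is exactly the $ij$-contribution of $L^K_0$ in \eqref{genK1-absorbed}, because $H_{ij;h}$ sends $K_i\mapsto h$ and $K_j\mapsto K_i+K_j-h$, so that $D(O,H_{ij;h}K)=O_i^hO_j^{K_i+K_j-h}\tprod_{\ell\neq i,j}O_\ell^{K_\ell}$. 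For a boundary edge $ij\in\partial E$ only $O_i$ is updated, to $vO_i+(1-v)T_j$, while $O_j=T_j$ stays frozen; the same identity with $n=K_i$ and second base point $T_j$ matches the boundary term \eqref{bo} of $L^K_0$, in which $h$ particles remain at $i$ and $K_i-h$ are absorbed at $j$, recorded through the factor $T_j^{K_i-h}=O_j^{K_i-h}$. Summing over all edges of $\oE$ yields the generator duality.

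To upgrade this to the time-$t$ statement \eqref{dualt} I would run the standard semigroup argument. Writing $S^O_t=e^{tL^O}$ and $S^K_t=e^{tL^K_0}$ for the two semigroups, which act on disjoint groups of variables and therefore commute with each other's generator, fix $t$ and set $g(s):=\bigl(S^O_s\,S^K_{t-s}D\bigr)(O,K)$ for $s\in[0,t]$. Differentiating gives $g'(s)=S^O_s L^O S^K_{t-s}D-S^O_s S^K_{t-s}L^K_0 D$; using that $S^K_{t-s}$ (acting on $K$) commutes with $L^O$ (acting on $O$) and then the generator duality $L^O D=L^K_0 D$ gives $g'(s)=S^O_s S^K_{t-s}\bigl(L^O D-L^K_0 D\bigr)=0$. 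Hence $g(0)=g(t)$, and since $g(0)=(S^K_tD)(O,K)=\mathbb E_K\bigl(D(O,K(t))\bigr)$ and $g(t)=(S^O_tD)(O,K)=\mathbb E_O\bigl(D(O(t),K)\bigr)$, this is precisely \eqref{dualt}.

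The main obstacle is analytic rather than algebraic: differentiating under the semigroups and applying the (unbounded) generators to the polynomial $D$ requires moment control. I expect this to be supplied by two monotonicity facts. On the particle side the boundary is absorbing, so the total mass $\sum_{i}K_i(t)$ is non-increasing and stays bounded by $\sum_{i}K_i(0)$; on the opinion side every update is a convex combination of existing values and the boundary is frozen, so $\max_i O_i(t)\le\max\bigl(\max_i O_i(0),\max_{j\in\pVV}T_j\bigr)$ for all $t$. Consequently $D$ is uniformly bounded along both evolutions, all the relevant expectations and their $s$-derivatives are finite, and the interchange of differentiation with the semigroups is legitimate, which completes the argument.
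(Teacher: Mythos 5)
Your proof is correct and takes essentially the same route as the paper: the heart of both arguments is the generator duality $L^O D(\cdot,K) = L^K_0 D(O,\cdot)$ for $D(O,K)=\prod_i O_i^{K_i}$, verified edge-by-edge (internal and boundary separately, using $O_j=T_j$ on $\pVV$) through the Beta integral $\int_0^1 v^h(1-v)^{n-h}\,dv = \frac{h!(n-h)!}{(n+1)!}$. The only difference is one of detail: the paper outsources the passage from generator duality to the time-$t$ identity to Liggett's book, whereas you carry out the semigroup interpolation argument explicitly and supply the boundedness facts (conserved particle number, convex-combination bound on opinions) that make it rigorous.
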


\begin{proof}
 To show the validity of \eqref{dualt}, as illustrated in \cite{liggett85}, we need just to show that for any $O,K$ 
\begin{equation}\label{dualstat}
L^O\Bigl(\prod_{i\in \oVV}O_i^{K_i}\Bigr)=L^K_0\Bigl(\prod_{i\in \oVV}O_i^{K_i}\Bigr)\,,
\end{equation}
where the generator $L^O$ acts on the variables $O$ and the generator $L^K_0$ acts on the variables $K$. Relation \eqref{dualstat} follows directly by the following computations that are easily derived using
\begin{align}
\int_0^1dv \, v^k(1-v)^m=\frac{1}{(m+k+1)\binom{m+k}{k}}\,.
\end{align}
For $ij\in E$ we have
\begin{align}
&\int_0^1 dv\, \left(vO_i+(1-v)O_j\right)^{K_i+K_j}\nonumber \\
&=\sum_{m=0}^{K_i+K_j}\binom{K_i+K_j}{m}\int_0^1 dv\,\big(vO_i\big)^m\big((1-v)O_j\big)^{K_i+K_j-m}\nonumber \\
&=\sum_{m=0}^{K_i+K_j}\frac{1}{K_i+K_j+1}O_i^mO_j^{K_i+K_j-m}\,.
\end{align}
For $ij\in \partial E$ we have
\begin{align}
&\int_0^1 dv\, \left(vO_i+(1-v)T_j\right)^{K_i}T_j^{K_j}\nonumber \\
&=\sum_{m=0}^{K_i}\binom{K_i}{m}\int_0^1 dv\,\big(vO_i\big)^m\big((1-v)T_j\big)^{K_i-m}T_j^{K_j}\nonumber \\
&=\sum_{m=0}^{K_i}\frac{1}{K_i+1}O_i^mT_j^{K_i+K_j-m}\,.
\end{align}
Formulas \eqref{dualstat} can be directly derived from the above computations.
\end{proof}

\begin{remark} \rm The duality relation between continuous and discrete KMP processes proved  in \cite{KMP}  is
\begin{equation}\label{dualkmp}
\mathbb E_\zeta\Bigl(\prod_{i\in \oVV}\zeta _i(t)^{K_i}/K_i!\Bigr)=\mathbb E_K\Bigl(\prod_{i\in \oVV}\zeta_i^{K_i(t)}/K_i(t)!\Bigr)\,,
\end{equation}
where on the left hand side the expectation is with respect to the continuous KMP process $\zeta(t)$ with initial condition $\zeta$ and $K$ fixed, while on the right hand side we have the expected value with respect to the absorbed discrete KMP process $K(t)$, defined via \eqref{genK1-absorbed}, with initial condition $K$ and $\zeta$ is fixed.

If in \eqref{dualkmp} we consider the initial condition $\zeta$ distributed according to $\mu$ which is a product of exponential distributions of mean $O$ we get
\begin{equation}\label{dualkmpp}
\int \mu(d\zeta)\mathbb E_\zeta\Bigl(\prod_{i\in \oVV}\zeta _i(t)^{K_i}/K_i!\Bigr)=\mathbb E_K\Bigl(\prod_{i\in \oVV}O_i^{K_i(t)}\Bigr)\,,
\end{equation}
Using \eqref{32} on the left hand side we get the duality relation \eqref{dualt}.
\end{remark}
\paragraph{Two point correlations in the one dimensional lattice}
Using the duality relation \eqref{dualt} and  considering the limit $t\to +\infty$, we can compute the stationary moments of the opinion model from the asymptotic behavior of the discrete KMP model absorbed at the boundary. We consider here the case of the one dimensional lattice \eqref{odg} and moments of order $n=1,2$.

Consider the lattice $\VV=\{1,\dots,N-1\}$ and $\pVV=\left\{0,N\right\}$. Let us call 		\begin{equation}
\label{4.15}m^N_k:=\EE O^{\stat}_k,\quad M^N_{k,\ell}:=\EE\bigl( O^{\stat}_k\,O^{\stat}_\ell\bigr),\quad C^N_{k,\ell}:=M^N_{k,\ell}-m^N_km^N_\ell.
\end{equation}
 Since the covariances $C^N_{k,\ell}$ are symmetric in $k,\ell$ we can restrict to the region $0\leq k\leq \ell\leq N$.

Using the duality of the previous paragraph, we obtain that $m^N$ is related to the asymptotic  behavior of the discrete KMP process with one single particle. In this case we can identify $K(t)\equiv\left\{y_1(t)\right\}$, where $y_1(t)$ is the position at time $t$ of the only particle present; this is a simple absorbed random walk and
$m_k^N=\mathbb E_k(T_{y_1(\tau)})$ where $\tau=\inf\left\{t; y_1(t)\in \pVV\right\}$, and in particular solves the discrete harmonic problem
\begin{equation}\label{linear}
\left\{
\begin{array}{l}
m^N_k=\frac 12 m^N_{k-1}+\frac 12 m^N_{k+1}\,,\\
m^N_0=T_{-}\,, m^N_N=T_+\,,
\end{array}
\right.
\end{equation}
whose solution is $m^N_k=T_-+\frac{\left(T_+-T_-\right)}{N}k$.

The function $M^N$ is related to the asymptotic behavior of the boundary absorbed discrete KMP with two particles. In this case we can identify $K(t)\equiv\left\{y_1(t),y_2(t)\right\}$, where $y_1(t)\leq y_2(t)$ are the positions at time $t$ of the two particles; this can be easily shown to be a two dimensional inhomogeneous random walk
on the triangular region $\left\{(k,\ell)\in \mathbb Z\times \mathbb Z\,: 0\leq k\leq\ell\leq N\right\}$ which is absorbed at the 3 vertices. By the duality we have
\begin{equation}\label{4.17}
M_{k,\ell}^N=\mathbb E_{k,\ell}\big(T_{y_1(\tau)}T_{y_2(\tau)}\big),
\end{equation} 
where $\tau=\inf\left\{t\,:\, y_i(t)\in \pVV\,, i=1,2\right\}$. 

In particular we have that $M^N$ solve a bi-dimensional harmonic problem. The boundary conditions are fixed as
$M^N_{0,\ell}=T_-m^N_\ell$ and $M^N_{k,N}=m^N_kT_+$ since for these boundary points one of the two dual particles does not evolve and the values can be obtained solving a one dimensional problem like \eqref{linear}. Detailing the transition rates, a simple computation gives that the associate discrete problem is the following
\begin{equation}\label{disc2}
\begin{cases}
\frac 14\bigl(M^N_{k-1,\ell}+M^N_{k+1,\ell}+M^N_{k,\ell-1}+M^N_{k,\ell+1}\bigr)=m^N_{k,\ell} & 1<k+1< \ell<N\,,\\
\frac {3}{10}\bigl(M^N_{\ell-2,\ell}+M^N_{\ell-1,\ell+1}\bigr)+\frac 15\bigl(M^N_{\ell-1,\ell-1}+M^N_{\ell,\ell}\bigr)= M^N_{\ell-1,\ell} & 1<\ell<N\,,\\
\frac 14\bigl(M^N_{k-1,k-1}+M^N_{k-1,k}+M^N_{k+1,k+1}+M^N_{k,k+1}\bigr)= M^N_{k,k} & 0<k<N\,,\\
M^N_{0,\ell}=T_-m^N_\ell & 0\leq \ell \leq N\,,\\
M^N_{k,N}=m^N_kT_+ & 0\leq k\leq N\,.
\end{cases}
\end{equation}
The above equations are equivalent to those for the correlations of the KMP model in section 2.4 of \cite{Bertini_2007}.

\subsection{ Proof of Proposition \ref{prop2.7}}
\label{pco}
As in \cite{Bertini_2007}, it is not possible to write in a simple closed form the solution of \eqref{disc2} but it is possible to get a simple solution for each $N$ with a slight modification of the boundary conditions. The boundary conditions are the values fixed at the two catheti of the triangular region and are fixed by the values on the last two lines in \eqref{disc2}. We call $\tilde M^N_{k,\ell}$ the solution of this modified problem. We change the values of the boundary conditions  on just 2 points of the boundary that are 2 of the 3 vertices of the triangle. In particular we let the values $M_{0,0}^N$ and $M^N_{N,N}$ to be free parameters differently from \eqref{disc2} where they are fixed as $M_{0,0}^N=T_-^2$ and $M^N_{N,N}=T_+^2$.
Using these two free parameters we can search for a solution in the form
\begin{equation}\label{lasol}
\tilde M^N_{k,\ell}=\left\{
\begin{array}{ll}
Ak+B\ell +C k\ell +D\,,& k<\ell\\
Ck^2+(A+B)k+E\,, & k=\ell\,,
\end{array}
\right.
\end{equation}
and we find a solution provided
\begin{gather}\label{ipar}
A=\frac{(T_+-T_-)(T_++NT_-)}{N(N+1)}\,,\quad
B=T_-\frac{(T_+-T_-)}{N}\,,\quad
C=\frac{(T_+-T_-)^2}{N(N+1)}\,,\\
D=T_-^2\,,\quad
E=\frac{(T_+-T_-)^2}{2N(N+1)}+T_-^2\,.\label{ipar1}
\end{gather}
It is indeed not difficult to check that \eqref{lasol} with the coefficients fixed as in \eqref{ipar}-\eqref{ipar1} solves
\eqref{disc2} apart the fact that we have $\tilde M_{0,0}^N=\frac{(T_+-T_-)^2}{2N(N+1)}+T_-^2>M_{0,0}$ and $\tilde M^N_{N,N}=\frac{(T_+-T_-)^2}{2N(N+1)}+T_+^2>M_{N,N}$.

Let us introduce the function $F_N:\pVV\times\pVV\to \mathbb R^+$ defined by
\begin{equation}
\label{4.23}
F_N(k,\ell)=
\begin{cases}
\frac{(T_+-T_-)^2}{2N(N+1)}+T_-^2 & \textrm{if}\ k=\ell=0\\[1mm]
\frac{(T_+-T_-)^2}{2N(N+1)}+T_+^2 & \textrm{if}\ k=\ell=N\\[1mm]
T_-T_+ & \textrm{if}\ k,\ell\in \pVV\,, k\neq \ell\,.
\end{cases}
\end{equation}
Notice that $\tilde M^N$ has also a probabilistic representation 
\begin{equation}\label{4.24}
\tilde M_{k,\ell}^N=\mathbb E_{k,\ell}\Big(F_N(y_1(\tau),y_2(\tau))\Big).
\end{equation}
Since $F_N(k,\ell)\geq T_kT_\ell$ where 
\begin{equation}
\label{4.23b}T_kT_\ell=
\begin{cases}T_-^2 & \textrm{if}\ k=\ell=0\\[1mm]
T_+^2 & \textrm{if}\ k=\ell=N\\[1mm]
T_-T_+ & \textrm{if}\ k,\ell\in \pVV\,, k\neq \ell\,.
\end{cases}
\end{equation}  Using the two probabilistic representations \eqref{4.17} and \eqref{4.24} we get  that
$\tilde M_{k,\ell}^N\geq M_{k,\ell}^N$. The result now follow by the fact that \eqref{ilectilde} coincide with
$\tilde C_{k,\ell}=\tilde M_{k,\ell}^N-m^N_km^N_\ell$.
\qed 
\\
\begin{remark}\rm
The numbers $\tilde M^N_{k,\ell}$ and $\tilde C_{k,\ell}$ are respectively the moments and the covariances of the invariant measure of a modified opinion model $\tilde O$. The dynamics of $\tilde O$ in the internal edges is the same as that of $O$. At the boundary, when an exponential clock $\tau \in \mathcal N_{i,j}$ rings ($i,j$ is therefore either $(N-1),N$ or $1,0$), the variable $\tilde O_j$ is updated to $\tilde O'_j$ a fresh independent random variable (sampled from an i.i.d. sequence independent from everything) having mean value $T_j$ and variance $\frac{(T_+-T_-)^2}{2N(N+1)}$; while instead the random variable $\tilde O_i$ is updated to $U\tilde O_i+(1-V)\tilde O'_j$ where $U$ is the uniform mark associated to $\tau$. This can be proved writing the conditions $\mathbb E\bigl(L^{\tilde O}[\tilde O_k^{\textrm{stat}}\tilde O_\ell^{\textrm{stat}}]\bigr)=0$
for $0\leq k\leq \ell\leq N$, that coincide with the modified version of the system \eqref{disc2}.
\end{remark}

\subsection {Proof of Theorem \ref{ihydr}} 
\label{S42}
Using Proposition \ref{prop2.7} we can prove the hydrostatic behavior of the opinion model in the one dimensional graph \eqref{odg}. 

Consider the empirical measure $\pi_N(O)$ that is a positive measure on the real interval $[0,1]$ defined by
\begin{equation}
\pi_N(O)=\frac 1N \sum_{k=0}^N O_k \delta_{k/N}\,,
\end{equation}
where $\delta_a$ is the delta measure at $a\in[0,1]$ so that given a continuous function $\psi:[0,1]\to \mathbb R$ we have
$$
\int_{[0,1]}\psi \, d\pi_N(O)=\frac 1N\sum_{k=0}^N\psi(k/N)O_k\,.
$$
To prove Theorem \ref{ihydr} we now prove  that this sequence of random measures converges weakly in probability to an absolutely continuous measure having linear density $m(x):=T_-+x(T_+-T_-)$.

\medskip

We first observe that 
\begin{equation*}
\lim_{N\to\infty}  \mathbb E_{\nu^O}\Bigl(\int_{[0,1]}\psi\, d\pi_N(O)\Bigr)=
\lim_{N\to\infty} \frac 1N\sum_{k=0}^Nm^N_k\psi(k/N) =\int_{[0,1]}\psi(x)\, m(x) dx.
\end{equation*}
 Hence, we need to prove
\begin{equation}\label{farce}
\lim_{N\to +\infty}\mathbb P_{\nu^O}\Bigl(\Bigl|\int_{[0,1]}\psi \, d\pi_N(O)-\mathbb E_{\nu^O}\Bigl(\int_{[0,1]}\psi \, d\pi_N(O)\Bigr)\Bigr|>\epsilon\Bigr)=0\,.
\end{equation}
To prove \eqref{farce} we apply Chebysev inequality. By linearity, splitting a function in its positive and negative part, we can without loss of generality restrict to the case of positive test functions $\psi\geq 0$. Applying Chebysev inequality we have that 
\begin{equation}
\label{4.28a}
\mathbb P_{\nu^O}\Bigl(\Bigl|\int_{[0,1]}\psi \, d\pi_N(O)-\mathbb E_{\nu^O}\Bigl(\int_{[0,1]}\psi \, d\pi_N(O)\Big|\Bigr)>\epsilon\Bigr)\le \frac 1{N^2\epsilon^2} \sum_{k,\ell =1}^N C_{k,\ell}\psi(k/N)\psi(\ell/N).
\end{equation}
Using the bounds of Proposition \ref{prop2.7} we have finally that the left hand side of  \eqref{4.28a} is bounded by
\begin{align}
&\frac{(T_+-T_-)^2}{N+1}\frac{2}{\epsilon^2N^2}\sum_{k<\ell}\frac kN\Bigl(1-\frac{\ell}{N}\Bigr)\psi(k/N)\psi(\ell/N)\nonumber \\
&+\frac{(T_+-T_-)^2}{N+1}\frac{1}{\epsilon^2N^2}\sum_{k=1}^N\frac kN\Bigl(1-\frac{k}{N}\Bigr)\psi^2(k/N)\nonumber \\
& +\frac{(T_+-T_-)^2}{2N(N+1)}\frac{1}{\epsilon^2N^2}\sum_{k=1}^N\psi^2(k/N)\,,
\end{align}
which converges to zero as $N\to\infty$.
\qed 

Adapting the arguments in \cite{MR0630334,KMP} to the absorbed discrete KMP model that we have (that differs just by the boundary dynamics), it is possible to obtain stronger convergence results like local equilibrium, meaning  $\lim_{N\to +\infty}\nu^O(\tau_{uN}f)=\mu^{m(u)}(f)$, where $\mu^{m(u)}$ is a homogeneous product measure of exponential distributions of mean $m(u)$, and the convergence holds for any cylindrical function $f$.

\subsection{Almost sure Duality}
\label{cwalk3}
In this subsection we show that the opinion model is in dual relation with a system of random walks in a space-time random environment. 

The graphical construction of the process is done with a marked Poisson process $(\mathcal N, \mathcal V)$ with the same law as $(\cN,\cU)$. Fix the boundary values $O_{\pVV}(t)\equiv T_{\pVV}$. For $\tau\in \mathcal N_{ij}$, assume $O:=O(\tau-)$ is defined, denote  $V=V(\tau)$, and recalling \eqref{ho1} define 
\begin{equation}
O_\ell(\tau)  :=\bigl(H^O_{ij;V}O(\tau-)\bigr)_{\ell}.
 \label{ovt5}
\end{equation}

We describe a family of backward random walks as a function of a marked Poisson process $(\cN,\cV,\cU')$, where $\cU'$ has the same law as $\cV$ and it is independent of $(\cN,\cV)$. Call $\PP$ and $\EE$ the associated probability and expectation.

Let $s\in\R$, $k\in\oVV$ and let $(R_{k,s}(t))_{t<s}$ be a backwards random walk with initial point $k$ at time $s$:
\begin{align}
    \label{rell1}
  R_{k,s}(s):=k,
\end{align}
and governed by $(\cN,\cV,\cU')$ as follows. Let $\tau\in\cN_{ij}$, denote $V=V(\tau)$, $U'=U'(\tau)$, assume $R_{k,s}(\tau+)=\ell\in\{i,j\} \subset\oVV$, and define
\begin{align}
  R_{k,s}(\tau):=
  \begin{cases}
    i&\ell\in \VV \text{ and } U'<V\\
    j& \ell\in \VV \text{ and } U'>V\\
    j& \ell=j \text{ and } j\in \pVV.
  \end{cases}
       \label{rell2}
\end{align}
The third option in \eqref{rell2} implies that the walk is absorbed at the boundary vertices. When
$R_{k,s}(\tau+)\not\in\{i,j\}$ then $R_{k,s}(\tau)=R_{k,s}(\tau+)$.

Then, analogously (but backwards) to \eqref{xttt}, define
\begin{align}
  \label{rell3}
  R_{k,s}(t) &:= R_{k,s}(\tau_t),\\
  \tau_t&:= \inf\{\tau\in\cN\cap[t,\infty]\}. \label{tau2}
\end{align}

The process $R_{k,s}(t)$ is just a random walk that at rate $1/2$ jumps to a neighbor. But when one fixes $(\cN,\cV)$, it is a ``random walk in a space-time random environment'', as follows.  Denoting $V=V(\tau)$, if $R_{k,s}(\tau+)\in\{i,j\}$ for  $\tau\in\cN_{ij}$, then  at time $\tau$ the walk will be at $i$ with probability $V$, or at $j$ with probability $(1-V)$. Notice that if two walks are respectively at $i$ and $j$, then they coalesce at time $\tau$.
\\
\begin{proposition}[Almost sure duality] Fix a time $s>0$. Let $O(s)$ be constructed as a function of $(\cN,\cV)$ as in \eqref{ovt5}, and with initial configuration $O(0)$. Let the family of processes $\bigl(R_{k,s}(\cdot): k\in\oVV\bigr)$ be constructed with  $(\cN,\cV,\cU')$ as in \eqref{rell3}. Then the following almost sure duality relation holds:
  \begin{align}
    \label{dualr}
      O_{k}(s) = \EE \bigl(O_{R_{k,s}(0)}(0)\,\big|\, (\cN,\cV)\bigr),\qquad \PP\text{-a.s.},\quad k\in\oVV.
  \end{align}
\end{proposition}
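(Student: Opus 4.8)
The plan is to exploit the linearity of the opinion update together with the independence of the marks $\cU'$, integrating them out one event at a time and telescoping from time $0$ up to time $s$.

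First I would record two structural facts. Since the graph is finite and each edge carries a rate-one Poisson process, almost surely only finitely many marks of $\cN$ fall in $(0,s]$; enumerate them $\tau_1<\dots<\tau_n$, with $\tau_j\in\cN_{a_jb_j}$. Both the forward opinion process $O(\cdot)$ (a function of $(\cN,\cV)$ and of the fixed configuration $O(0)$ via \eqref{ovt5}) and the backward walk $R_{k,s}(\cdot)$ (a function of $(\cN,\cV,\cU')$ via \eqref{rell2}–\eqref{rell3}) are piecewise constant between consecutive marks. Setting $N(t):=O_{R_{k,s}(t)}(t)$, one has $N(s)=O_k(s)$ and $N(0)=O_{R_{k,s}(0)}(0)$, so the claim \eqref{dualr} becomes $\EE[N(0)\mid(\cN,\cV)]=N(s)$.

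The crux is a one-step averaging identity. Fix $\tau=\tau_j\in\cN_{ab}$, write $V=V(\tau)$, and let $p=R_{k,s}(\tau+)$ be the walk position just above $\tau$. If $p\notin\{a,b\}$ the walk does not jump and $O_p$ is not updated, so $N$ is unchanged across $\tau$. If $p\in\{a,b\}$ with $ab$ internal, the forward rule \eqref{ho1} sets both $O_a(\tau)$ and $O_b(\tau)$ equal to $VO_a(\tau-)+(1-V)O_b(\tau-)$, while the backward rule \eqref{rell2} sends the walk to $a$ when $U'(\tau)<V$ and to $b$ when $U'(\tau)>V$; hence, averaging only over the fresh mark $U'(\tau)$,
\[\EE_{U'(\tau)}\bigl[O_{R_{k,s}(\tau)}(\tau-)\bigr]=V\,O_a(\tau-)+(1-V)\,O_b(\tau-)=O_p(\tau).\]
The boundary case $b\in\pVV$ is identical once one uses $O_b\equiv T_b$ and the absorption clause of \eqref{rell2}. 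In every case this reads $\EE_{U'(\tau)}[N(\tau-)\mid\text{position at }\tau+,\ O(\tau-)]=N(\tau+)$.

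I would then telescope. The marks $U'(\tau_1),\dots,U'(\tau_n)$ are i.i.d.\ uniform and independent of $(\cN,\cV)$, so I compute $\EE[N(0)\mid(\cN,\cV)]$ by iterated expectation, integrating the earliest-time mark first. Conditioning on $(\cN,\cV)$ and on $U'(\tau_2),\dots,U'(\tau_n)$ determines the position $R_{k,s}(\tau_1+)$ and the pre-update opinions $O(\tau_1-)=O(0)$; the one-step identity replaces $N(0)=N(\tau_1-)$ by $N(\tau_1+)$, which by piecewise constancy equals $N(\tau_2-)$. Repeating for $\tau_2,\dots,\tau_n$ collapses the expectation to $N(\tau_n+)=O_k(s)=N(s)$, which is exactly \eqref{dualr}. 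I expect the only real obstacle to be the measurability/independence bookkeeping that licenses the tower property: one must check that at step $j$ the walk position just above $\tau_j$ and the forward opinion just below $\tau_j$ are measurable with respect to the conditioning $\sigma$-field while $U'(\tau_j)$ stays independent of it, so that the averaging in the one-step identity may be performed inside the iterated expectation. The algebraic content — that the convex update weights $V,1-V$ coincide with the backward jump probabilities, and that frozen boundary coordinates match the absorption — is immediate.
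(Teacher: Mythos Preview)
Your proof is correct and follows essentially the same route as the paper: verify the one-step identity that averaging the fresh mark $U'(\tau)$ turns $O_{R_{k,s}(\tau)}(\tau-)$ into the updated opinion at the walk's pre-jump location, then propagate this across the finitely many marks in $(0,s]$. The paper compresses your telescoping into the single phrase ``use the strong Markov property to conclude''; your iterated-expectation argument is simply an explicit unpacking of that, and the measurability bookkeeping you flag is exactly what makes the tower property legitimate here.
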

\begin{proof} Let $\tau\in\cN_{ij}\cap[0,s]$ and denote $V=V(\tau)$. If $j\in\pVV$ we have
     \begin{align}
     O_{j}(\tau)=O_{j}(\tau-)= \EE\bigl( O_{R_{k,s}(\tau)}(\tau)\,\big|\,R_{k,s}(\tau+)=j\,,\,(\cN,\cV)\bigr).
  \end{align}
Otherwise, $j\in\VV$ and we have
   \begin{align}
     O_{i}(\tau)= O_{j}(\tau) &= V O_i(\tau-) + (1-V) O_j(\tau -)\\
     &= \EE\bigl( O_{R_{k,s}(\tau)}(\tau)\,\big|\,R_{k,s}(\tau+)\in\{i,j\}\,,\,(\cN,\cV)\bigr).
   \end{align}
Use the strong Markov property to conclude.
\end{proof}

We now describe the invariant measure for the local agreement process in terms of the expected absorption locations of the walks.
The hitting time of the boundary for the walk $R_{k,s}$ is defined by
\begin{align}
  \theta_{k,s} := \sup \{t<s: R_{k,s}(t)\in\pVV\}.
\end{align}
In particular, the hitting vertex $R_{k,s}(\theta_{k,s})$ has opinion $T_{R_{k,s}(\theta_{k,s})}$.
\\
\begin{proposition}[Dual expression for the invariant measure]
  \label{invo}
Let the family of walks \[R=(R_{k,s})_{k\in\oVV,s\in\R}\] be governed by $(\cN,\cV,\cU')$, as defined in \eqref{rell1}-\eqref{rell3}. Define
\begin{align}
  \label{tell21}
  O_k^{\stat}(s) := \EE (T_{R_{k,s}(\theta_{k,s})}\,|\,(\cN,\cV)), \qquad k\in\oVV,\,s\in\R.
\end{align}
Then, the process $O^{\stat}(t)$ is a stationary version of the process with generator~\eqref{genO}. In particular, the law of $O^{\stat}(s)$ does not depend on $s$ and it is the unique invariant measure for the opinion model with boundary conditions $T_{\pVV}$.
\end{proposition}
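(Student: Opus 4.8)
The plan is to prove Proposition \ref{invo} by combining the almost sure duality relation \eqref{dualr} with a stationarity argument for the backward walk family. First I would show that the right-hand side of \eqref{tell21} is consistent with the dynamics \eqref{ovt5}, i.e.\ that $O^{\stat}(t)$ as defined actually evolves according to the opinion model generator \eqref{genO}. The key observation is that the hitting time $\theta_{k,s}$ and hitting vertex $R_{k,s}(\theta_{k,s})$ depend only on the walk started at $(k,s)$ and on the environment $(\cN,\cV,\cU')$, so the absorption opinion $T_{R_{k,s}(\theta_{k,s})}$ is a well-defined random variable. Taking the conditional expectation over $\cU'$ (given $(\cN,\cV)$) produces a deterministic function of $(\cN,\cV)$, which is exactly $O^{\stat}_k(s)$.

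The heart of the argument is to verify that, for a fixed realization of $(\cN,\cV)$, the family $s\mapsto (O^{\stat}_k(s))_{k\in\oVV}$ satisfies the update rule \eqref{ovt5} at each $\tau\in\cN$. Fix $\tau\in\cN_{ij}$ and denote $V=V(\tau)$. The point is that a walk started at $(k,s)$ for $s$ just after $\tau$, when it is at vertex $i$ or $j$ at time $\tau+$, moves to $i$ with probability $V$ and to $j$ with probability $1-V$ (integrating out the fresh mark $U'(\tau)$), by the recipe \eqref{rell2}. Averaging the absorption opinion over this branching yields precisely $V O^{\stat}_i(\tau-) + (1-V) O^{\stat}_j(\tau-)$ for an interior update, and leaves the boundary value $T_j$ fixed when $j\in\pVV$. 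Thus I would condition on $R_{k,s}(\tau+)$ and use the tower property for the conditional expectation in \eqref{tell21}, together with the strong Markov property of the walk at time $\tau$, to match the generator action exactly as in the proof of the almost sure duality. This shows $O^{\stat}$ is a version of the opinion process.

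Next I would argue stationarity. Because the marked Poisson process $(\cN,\cV,\cU')$ is, by construction, invariant under time shifts (the Poisson points have mean measure $dt$ on all of $\R$ and the marks are i.i.d.), the joint law of $(R_{k,s+h})_{k}$ equals that of $(R_{k,s})_{k}$ after shifting time by $h$. Consequently the law of the vector $(O^{\stat}_k(s))_{k\in\oVV}$ does not depend on $s$; this is the distributional stationarity. Combined with the previous step, which identifies $O^{\stat}$ as a genuine trajectory of the generator \eqref{genO}, this makes the common law of $O^{\stat}(s)$ an invariant measure for the opinion model. Uniqueness was already established in the discussion following Definition \ref{def2.3} (irreducibility together with the attractive invariant compact set $I^{\oVV}$), so the stationary law of $O^{\stat}(s)$ must coincide with $\nu^O$.

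The main obstacle I anticipate is the careful handling of the time-reversed, environment-conditioned construction: one must make precise that $O^{\stat}(t)$ built from backward walks started at every $(k,s)$ simultaneously is measurable, well-defined as a two-sided stationary process, and that the conditional expectation over $\cU'$ commutes correctly with the branching at each Poisson point. In particular, the subtle step is verifying that the \emph{forward} evolution encoded by \eqref{ovt5} is reproduced by differentiating with respect to the \emph{terminal} time $s$ of the backward walks; this requires showing that adding or removing a Poisson point at the top of the time window $[0,s]$ acts on $O^{\stat}(s)$ through exactly one application of $H^O_{ij;V}$. Once this correspondence between the top Poisson point and the last generator update is pinned down, the remainder reduces to the coalescence/branching bookkeeping already used in the almost sure duality proof.
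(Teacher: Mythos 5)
Your proposal is correct, but it takes a genuinely different route from the paper. The paper never re-verifies the pathwise dynamics of $O^{\stat}$; instead it applies the already-proven almost sure duality \eqref{dualr} to an \emph{arbitrary} initial configuration $O'$ placed at a time $t<s$, writing (with $\tPP:=\PP(\,\cdot\,|(\cN,\cV))$)
\begin{align*}
O_k'(s) = \sum_{j\in\VV} \tPP(R_{k,s}(t)=j)\, O'_j + \sum_{j\in\pVV} \tPP(R_{k,s}(t)=j)\,T_j,
\end{align*}
and then lets $t\to-\infty$: since every backward walk is absorbed at the boundary in finite time ($\theta_{k,s}>-\infty$ a.s.), the term carrying $O'$ vanishes and the limit is exactly $O^{\stat}_k(s)$; shift invariance of the environment then gives that the law is independent of $s$, hence invariant. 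This is a coupling-from-the-past argument, and it buys something your route does not: almost sure convergence (given the environment) of the opinion process started from \emph{any} configuration to $O^{\stat}$, which by itself yields both invariance and uniqueness of the limiting law. Your argument instead verifies directly that $s\mapsto O^{\stat}(s)$ is a stationary pathwise solution of the graphical dynamics \eqref{ovt5} --- the tower-property/branching computation at each Poisson point, which is essentially the same computation the paper performs once, inside the proof of the almost sure duality --- and then concludes invariance from stationarity plus the Markov property, importing uniqueness from the discussion after Definition \ref{def2.3}. That is a legitimate, self-contained alternative which avoids any limiting procedure, at the price of redoing the branching bookkeeping and of invoking uniqueness separately. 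Two small points you should make explicit to be complete: (i) $\theta_{k,s}>-\infty$ almost surely (finite connected graph, walks eventually hit $\pVV$), which is needed for \eqref{tell21} to be well defined --- the paper states this fact where it is used; (ii) $O^{\stat}(s_0)$ is measurable with respect to the environment restricted to $(-\infty,s_0]$, hence independent of the driving noise after $s_0$, which is what turns your ``stationary trajectory'' into a genuine realization of the Markov semigroup started from its marginal law.
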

\begin{proof}
  This is a classic application of duality, \cite{KMP,liggett85,durrett88}.  Take $t<s$,  denote $\tPP:=\PP(\,\cdot\,|(\cN,\cV))$, let $O'$ be an arbitrary opinion configuration, define  $O'(t):=O'$ and
  \begin{align}
    \label{oinv1}
    O_k'(s) &:= \sum_{j\in\VV} \tPP(R_{k,s}(t)=j)\, O'_j
               + \sum_{j\in\pVV} \tPP(R_{k,s}(t)=j)\,T_j.
  \end{align}
By duality the process $ (O_k'(s))_{s\ge t}$ has the law of the opinion model $O(s-t)$ with initial configuration $O'$. Taking $t\to-\infty$, the first term vanishes because all random walks are absorbed at the boundaries. Since $\theta_{k,s}>-\infty$ almost surely for all $k,s$,  we get
  \begin{align}
    \label{oinv2}
 \lim_{t\to-\infty}O'_k(s) &= \sum_{j\in\pVV} \tPP(R_{k,s}(\theta_{k,s})=j)\,T_j =: O_k^{\stat}(s).
  \end{align}
Since the law of the $O^{\stat}(s)$ does not depend on~$s$, its law is the invariant measure $\nu$ with boundary conditions $T_{\pVV}$.
\end{proof}

\subsection{Spiking disagreement edges}
\label{S44}
There is no explicit expression for the invariant measure of the opinion model $O$ so far. However, for a process that encodes the edges with disagreeing individuals (or equivalently the pair of neighboring oscillators with different temperatures) it is possible to construct a perfect simulation algorithm and obtain several properties of the invariant measure.

Given an opinion configuration $O$, we say that $ij\in \oE$ is a disagreement edge if $O_i\neq O_j$, otherwise $ij$ is an agreement edge. The disagreement indicator function for edge $ij\in\oE$ at time $t$ is a function of the opinion configuration $O(t)$ defined by
\begin{align}
  \label{eta2}
  \eta_{ij}(t):= \one\{O_i(t)\neq O_j(t)\},\qquad ij\in\oE.
\end{align}
Assume that the graph has a unique connected component and that the boundary conditions are non constant. In this case there is a unique invariant opinion measure $\nu^O$. The stationary opinion configuration $O^{\stat}$ induces a disagreement configuration $\eta^{\stat}$ via \eqref{eta2} satisfying: (a) boundary edges have value $1$, and (b) internal edges sharing a vertex cannot take the value 0 simultaneously. For initial configurations satisfying (a) and (b) the $\eta$ dynamics is very simple and depends only on $\cN$. When $\tau\in\cN_{ij}$, all internal edges sharing exactly one vertex with $ij$ become disagreement edges, and if $ij$ is internal, it becomes an agreement edge. Boundary edges and edges not sharing vertices with $ij$ do not change. More precisely,
\begin{align}
  \label{etat4}
  \eta_{ij}(\tau) &:= H_{ij}\eta(\tau-)\\
 (H_{ij}\eta)_{k\ell}&:=  \begin{cases}
    0&k\ell=ij\text{ and }j\in\VV\\
    1&|\{k,\ell\}\cap\{i,j\}|=1\\
    \eta_{k\ell}&\text{else}
    \end{cases}\label{etat5}
\end{align}
This implies that
the process is Markov in the set
\begin{align}
  \label{etaspace}
 \cE_n &:=\Bigl\{\eta\in\{0,1\}^{\oE}: \eta(ij)=1 \text{ for all }ij\in\pE,\text{ and }\\
  &\qquad\eta(ij)+\eta(k\ell)>0 \text{ if }\{k\ell,ij\}\subset E\text{ and } |\{i,j\}\cap\{k,\ell\}|=1\Bigr\},
\end{align}
with generator
\begin{align}
  L^\eta f(\eta) &:= \sum_{ij\in \oE} \eta_{ij}\,\bigl[f(H_{ij}\eta)-f(\eta)\bigr],\label{Leta26}
\end{align}
where $|J|$ is the cardinal of the set $J$. Given a graph $(\oV,\oE)$ an independent set of edges is a subset of $\oE$ such that all its elements are disjoint (do not share vertices). Shortly, configurations $\eta$ in $\cE_n$ are such that the set of edges with zero value in $\eta$ is an independent subset of $E$ and all the boundary edges have value one.

The process $\eta(t)$ can be seen as a neuron spiking model: when a neuron located at the edge $ij$ has ``energy'' 1, then it spikes at rate 1; at spiking times all internal  edges sharing a vertex with $ij$, get energy 1, and $ij$ drops its energy to 0;  neurons at boundary edges keep their energy equal to 1 at all times.

\paragraph{\sl Disagreement in the homogeneous case.} 
When the boundary conditions $T_{\pE}\equiv 1$, the unique invariant measure for the $O$ process is the delta function concentrating on the ``all one'' configuration, with corresponding $\eta$ configuration ``all zero''. But if the agreement configuration $\eta$ associated to $O(0)$ is in the set \eqref{etaspace}, $O(t)$ converges to ``all one'' but $\eta(t)$ converges to its unique invariant measure which does not coincide with ``all zero'', because $O(t)$ converges to ``all one'' but never attains that point.

\paragraph{\sl Disagreement in gossip/averaging models.} 
We remark that the gossip/averaging models \cite{zbMATH05659657,zbMATH06050911} have the same disagreement process $\eta(t)$ as our opinion process $O(t)$. This implies that the results in this section extend to those models.

\paragraph{Construction (perfect simulation) of $\mu^\eta$}  
We now show that the process $\eta(t)$ has a unique invariant measure $\mu^\eta$, by perfect simulation.
Consider a uniform random permutation $\sigma:\{1,\dots, |\oE|\}\to \oE $. Let $\eta_{ij}[0]= 0$, for internal $ij$ and $1$ for boundary $ij$. Let $E[0]:=\pE$, and define iteratively
\begin{align}
  \label{etas1}
\eta_{ij}[n+1]&:=  \eta_{ij}[n] + \one\{ ij\notin E[n],\;\text{ and } ij \text{ is neighbor of $\sigma(n+1)$\}}\,,\\[3mm]
     E[n+1]&:= E[n] \cup \{\sigma(n+1) \text{ and its neighbors} \}\,;
\end{align}
$\eta[n]$ is the configuration obtained when the edges $\sigma(1),\dots,\sigma(n)$ and their neighbors have been updated. $E[n]$ is the set of edges in $\eta[n]$ that have been updated.

Call $\eta^*$ the configuration obtained when $E[n]$ hits $\oE$.

\paragraph{\sl Example of simulation of $\eta^*$} Consider $\oE=\{12,23,34,45,56,67,78,89\}$ and $\pE=\{12,89\}$.
The 8 edges are represented by the pairs of successive digits of {\ttfamily 123457689} in the top line of the array below.
In the second line the edge $ij$ is represented by {\ttfamily i}, and {\ttfamily  36581247} is an arbitrary permutation of the eight edges represented by {\ttfamily 12345768}.
The third line $\eta[0]=$ {\ttfamily  1......1} is the initial configuration; dots represent edges not in $E[n]$, in particular, $E[0]=\pE$, the boundary edges with value 1.\\\penalty-1000

{\parskip 0pt
  \obeylines
  \centering
  \ttfamily
  \obeyspaces
\phantom{$\eta[0]$}\hspace{5.5mm}  1 2 3 4 5 6 7 8 9
\phantom{$\eta[0]$}\ \ \   3 6 5 8 1 2 4 7
$\eta[0]$\ \ \   1 .\ .\ .\ .\ .\ .\ 1
$\eta[1]$\ \ \   1 .\ .\ 1 0 1 .\ 1
$\eta[2]$\ \ \   1 .\ .\ 1 0 1 1 1
$\eta[3]$\ \ \   1 1 .\ 1 0 1 1 1
$\eta[4]$\ \ \   1 1 .\ 1 0 1 1 1
$\eta[5]$\ \ \   1 1 0 1 0 1 1 1
}\vspace{3mm}

The simulation works as follows. [1] Put {\ttfamily 101} centered at {\ttfamily 1}. [2] Try to put {\ttfamily 101} centered at {\ttfamily 2}, but there is a unique dot to the right of 2, so put just a {\ttfamily 1} there. [3] Try to put {\ttfamily 101} centered at {\ttfamily 3}, but {\ttfamily 3} is a boundary edge, so just put a {\ttfamily 1} to the right of {\ttfamily 3}. [4] Try to put {\ttfamily 101} centered at {\ttfamily 4}, but all edges are already updated, so do nothing. [5] Try to put {\ttfamily 101}  centered at {\ttfamily 5}, but just put the $0$ as the other sites are already updated. Stop because there are no more dots, meaning  $E[5]=\oE$. The bottom line configuration $\eta[5]$  is $\eta^*$.
\\
\begin{proposition}[Invariant measure for the spiking disagreeing process]
Assume the graph $(\oVV,\oE)$ is connected. Assume $ij\mapsto \sigma(ij)$ is a uniform random permutation of $\oE$, then the distribution of $\eta^*$ is the unique invariant measure for the spiking process $\eta(t)$.
\end{proposition}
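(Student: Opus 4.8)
My plan is to read this as a coupling-from-the-past statement and realise $\mu^\eta$ as the time-$0$ marginal of the stationary $\eta$ process. Since the state space $\cE_n$ of \eqref{etaspace} is finite and, on a connected graph, the chain with generator \eqref{Leta26} is irreducible (from $\mathbf 1\equiv$ ``all edges equal $1$'' one reaches any $\eta\in\cE_n$ by firing its independent zero-set, and conversely any configuration reaches $\mathbf 1$ by routing each zero edge to the boundary and firing a boundary edge, which resets its neighbours to $1$ without vanishing itself), there is a unique invariant measure $\mu^\eta$. I construct the stationary version $(\eta(t))_{t\in\R}$ from the Harris graphical construction driven by the clocks $\cN=\cup_{ij}\cN_{ij}$ on the whole line, so that $\eta(0)\sim\mu^\eta$. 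The crucial structural fact, already noted in this section, is that on $\cE_n$ the $\eta$ dynamics is a \emph{deterministic} functional of $\cN$ alone: at a ring of an edge one applies $H_{ij}$ of \eqref{etat5} iff the edge currently carries value $1$. The goal is to show that $\eta(0)$ is a deterministic function of the order in which the edges record their last ring before time $0$, and that the simulation computes precisely this function.

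The heart of the argument is a characterisation I would isolate as a lemma. Write $e'\sim e$ when $e'\neq e$ shares a vertex with $e$, let $\tau_e:=\sup(\cN_{e}\cap(-\infty,0])$ be the last ring time of $e$ before $0$, and note these times are a.s.\ distinct. \textbf{Claim:} for an internal edge $e$ one has $\eta_e(0)=0$ if and only if $\tau_e>\tau_{e'}$ for every $e'\sim e$, i.e.\ the most recent ring among $e$ and its neighbours occurs at $e$; boundary edges always carry value $1$. The direction ($\Leftarrow$) is immediate: after $\tau_e$ no edge in $\{e\}\cup\{e'\sim e\}$ rings, and $\eta_e$ can only change at rings of $e$ or of a neighbour, so $\eta_e$ is frozen after $\tau_e$; moreover $\eta_e(\tau_e)=0$ whether or not $e$ fires (if it fires, $H_e$ sets it to $0$; otherwise it was already $0$). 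Hence $\eta_e(0)=0$.

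For the direction ($\Rightarrow$), which I expect to be the main obstacle, I argue by contradiction. Suppose the most recent ring among $e$ and its neighbours is at some $e'\sim e$, at time $\tau_{e'}>\tau_e$. Since nothing in $\{e\}\cup\{e''\sim e\}$ rings after $\tau_{e'}$, we have $\eta_e(0)=\eta_e(\tau_{e'})$. If the ring at $e'$ fires, then $H_{e'}$ of \eqref{etat5} sets $\eta_e(\tau_{e'})=1$ (two distinct edges share at most one vertex, so $e'$ shares exactly one vertex with $e$), contradicting $\eta_e(0)=0$; this is automatic when $e'$ is a boundary edge, since boundary edges carry value $1$ at all times. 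If instead the ring at $e'$ does not fire, then $\eta_{e'}(\tau_{e'}-)=0$, while also $\eta_e(\tau_{e'}-)=\eta_e(0)=0$. But then $e$ and $e'$ are adjacent internal edges both equal to $0$ in the stationary configuration $\eta(\tau_{e'}-)\in\cE_n$, contradicting the independence constraint defining $\cE_n$ in \eqref{etaspace}. This proves the Claim, and as a by-product shows that $\eta(0)$ depends on $\cN$ only through the finite family $(\tau_e)_{e\in\oE}$, which gives coalescence.

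It remains to match the Claim with the simulation. By exchangeability of the independent Poisson processes $(\cN_{ij})$, ranking $(\tau_e)_{e\in\oE}$ from most to least recent produces a uniform random permutation $\sigma$ of $\oE$ with $\tau_{\sigma(1)}>\tau_{\sigma(2)}>\cdots$, which is exactly the permutation feeding the algorithm. In the recursion \eqref{etas1} an internal edge $e$ retains the value $0$ precisely when it is reached while still undetermined, i.e.\ when no neighbour has been processed before it, i.e.\ when $\tau_e>\tau_{e'}$ for all $e'\sim e$ — exactly the condition of the Claim; every other internal edge is set to $1$ by an earlier-processed neighbour, and boundary edges begin and remain at $1$. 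Hence $\eta^*=\eta(0)$ as functions of $\sigma$, so $\eta^*\eqlaw\eta(0)\sim\mu^\eta$. Finally, the procedure terminates in at most $|\oE|$ steps because $E[n]$ strictly increases until, by connectivity of $(\oVV,\oE)$, it exhausts $\oE$. This exhibits $\eta^*$ as a perfect sample from the unique invariant measure $\mu^\eta$.
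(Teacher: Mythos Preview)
Your proof is correct and follows essentially the same route as the paper: both rest on the observation that a clock ring at edge $ij$ deterministically yields $\eta_{ij}=0$ and $\eta_{k\ell}=1$ for neighbours $k\ell$, regardless of the prior state (your Claim packages this as a characterisation of $\eta_e(0)$ via last-ring times, while the paper states the determinism directly), so that $\eta(0)$ depends only on the order of the $(\tau_e)_{e\in\oE}$, which is a uniform permutation. The paper's argument is terser---it does not split the characterisation into two directions or spell out irreducibility---but the underlying idea is identical.
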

\begin{proof}
We use the construction \eqref{etat4} and \eqref{etat5} and observe that when $\tau\in\cN_{ij}$ and $\eta_{ij}(\tau-)=1$ for a internal edge $ij$, then at time $\tau$ the value at $ij$ becomes 0, and the values at the neighbors of $ij$ become 1, disregarding the previous values. On the other hand, if $\eta_{ij}(\tau-)=0$, we know that the neighbors of $ij$ are 1, hence we know that at time $\tau$ the values at $ij$ will be 0 and at the neighbors will be 1, independently of the configuration at time $\tau-$.

We construct $\eta(0)$ exploring the configuration backwards. It is only necessary to look at the biggest negative $\tau$ associated to each $ij$. Since only the order of $\tau$'s matter, we can construct $\eta[1],\dots, \eta^*$ using the permutation induced by the order of the $\tau$'s going back in time.
\end{proof}

\paragraph{Markovian behavior of marginals}
We can compute the density of $\eta_{ij}^{\stat}$. If $\eta_{ij}(\tau-)=1$ and  $\tau\in\cN_{ij}$, then $\eta_{ij}(\tau)=0$. On the other hand, if $\eta_{ij}(\tau-)=0$, then $\eta_{k\ell}(\tau-)=1$ for $k\ell$ sharing a vertex with $ij$, and this configuration is preserved at time $\tau$. Hence $\eta_{ij}(t)$ performs a Markov process on $\{0,1\}$ with generator
\[
  \begin{pmatrix}
   -n&n\\
 1& -1
\end{pmatrix}
\]
where $n$ is the number of edges $k\ell$ sharing exactly one internal vertex with $ij$. The invariant measure for this chain is  $\mu^\eta(\eta_{ij}=0) = \frac1{n+1}$ and $\mu^\eta(\eta_{ij}=1) = \frac{n}{n+1}$.

The same argument works for any subset $E'\subset \oE$ that has a Markovian evolution that is the same as you consider as boundary edges, and then freezed at the value 1, all the edges of $\oE$ that share one vertex with an edge of $E'$.

\paragraph{Brownian web} Assume $\VV$ is a integer segment. Then the $\eta$ process can be thought of as  coalescing branching random walks. It is a continuous analogous to the space-time discrete process of coalescing random walks, which are created at all times introduced by Arratia \cite{zbMATH03782178}. We conjecture that in a diffusive limit, the trajectories of the process $\eta(t)$ converge to the Brownian web in the space$\times$time band $[0,1]\times\R$, with absorbing boundary conditions, see Fontes, Isopi, Newman, Ravishankar \cite{zbMATH02148678} and Toth and Werner \cite{zbMATH01203670}.

\section{Discrete boundary driven KMP process}\label{sec:discrete-KMP}
We consider the boundary driven discrete KMP process with generator \eqref{genK}.
Denote $\uk=(k_i)_{i\in\oVV}$, $\us=(s_i)_{i\in\oVV}$, let $\varphi$ be a test function, and define $\mu^K$ by
  \begin{equation}\label{iz2}
  \int\,\mu^K(d \uk)\,\varphi(\uk) :=\int \nu^O(d\us)
\sum_{\uk}\Bigl( \varphi(\uk)\,\tprod_{i\in\oVV} \bigl(\tfrac{s_i}{s_i+1}\bigr)^{k_i}\tfrac{1}{s_i+1}\,\Bigr)  \,,
\end{equation}
where $\nu^O$ is the unique invariant measure for the Markov opinion process $O(t)$ with generator \eqref{genO}. That is, $\mu^K$ is a product of geometric random variables with random means distributed with $\nu^O$.
\begin{theorem}[Invariant measure for discrete KMP]
  \label{tdkmp}
    The measure $\mu^K$ is invariant for the discrete KMP process with generator \eqref{genK} and boundary conditions~$T_{\pVV}$.
\end{theorem}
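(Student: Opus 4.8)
The plan is to deduce the invariance of $\mu^K$ from the already-established invariance of the continuous stationary measure $\mu^\zeta$ (Theorem~\ref{teo3}) by coupling the continuous KMP process $\zeta(t)$ with the discrete KMP process $K(t)$ through a Poissonization. The elementary fact behind the coupling is that if $\zeta_i$ is exponential with mean $s_i$ and, conditionally on $\zeta_i$, $K_i$ is Poisson with parameter $\zeta_i$, then $K_i$ is geometric with mean $s_i$,
\begin{equation*}
\int_0^\infty \frac{\zeta^{n}e^{-\zeta}}{n!}\,\frac1{s_i}e^{-\zeta/s_i}\,d\zeta=\frac1{s_i+1}\Bigl(\frac{s_i}{s_i+1}\Bigr)^{n}.
\end{equation*}
Hence independent Poissonization, site by site, of a configuration distributed as a mixture of product exponentials with parameter measure $\nu^O$ produces a configuration distributed exactly as the mixture of product geometrics $\mu^K$ of \eqref{iz2}. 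I would start the continuous process from $\zeta(0)\sim\mu^\zeta$, so that $\zeta(t)\sim\mu^\zeta$ for every $t$, and build $K(t)$ on top of it as the Poissonization of $\zeta(t)$ at each time, so that $K(t)\sim\mu^K$ for all $t$.

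The core of the argument is to make the Poissonization dynamical, driven by the same marked Poisson clocks $(\cN,\cU,\cB)$ that govern $\zeta$. I would maintain the invariant that, conditionally on the counts $K(t)=\uk$, the $K_i$ particles on site $i$ are placed uniformly and independently on $[0,\zeta_i(t)]$, independently across sites; equivalently, the particles form independent rate-one Poisson processes on the intervals $[0,\zeta_i(t)]$. At a ring $\tau\in\cN_{ij}$ with uniform mark $U$, on an internal edge the continuous dynamics replaces $(\zeta_i,\zeta_j)$ by $(US,(1-U)S)$ with $S=\zeta_i+\zeta_j$; I pool the $m=K_i+K_j$ particles, which are then uniform on the concatenated interval of length $S$, cut at $US$, and send the particles below the cut to $i$ and those above to $j$. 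By the Beta integral
\begin{equation*}
\binom{m}{h}\int_0^1 u^{h}(1-u)^{m-h}\,du=\frac1{m+1},\qquad h=0,\dots,m,
\end{equation*}
the number landing on $i$ is uniform on $\{0,\dots,m\}$ and independent of the past given $(K_i,K_j)$, which is exactly the internal transition of \eqref{genK}; the conditional-uniformity invariant is preserved by the restriction property of Poisson processes. On a boundary edge I perform the same split for the internal vertex $i$, while the boundary value is refreshed: $\zeta_j\mapsto T_jB$ resamples a fresh Poisson process on $[0,T_jB]$, whose count is a fresh geometric of mean $T_j$, matching the boundary transition of \eqref{genK}.

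With the coupling in place I would verify that the count process $K(t)$ is, on its own, Markov with generator $L^K$: at each clock ring the new counts depend on the current counts only through $(K_i,K_j)$ and on the fresh mark $U$ (and, at the boundary, the fresh exponential $B$), because the dependence on the actual particle positions and on $S$ integrates out to the uniform split above once the conditional-uniformity invariant is used. Since $K(t)$ has generator $L^K$ and $K(t)\sim\mu^K$ for all $t$ (being the Poissonization of $\zeta(t)\sim\mu^\zeta$), the identity $\mu^K e^{tL^K}=\mu^K$ holds, i.e.\ $\mu^K$ is invariant.

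The main obstacle I anticipate is precisely the bookkeeping needed to show that $K(t)$ is genuinely autonomous Markov with exactly the rates in \eqref{genK}: one must check that the conditional-uniformity invariant (independent Poisson particles on $[0,\zeta_i]$, independent across sites) is propagated by \emph{every} update — internal split, boundary split, and boundary refresh — using the superposition, restriction and conditional-uniformity properties of Poisson processes, and that the resulting split is independent of the past given the current counts. As an alternative that sidesteps the explicit particle transport, one can instead prove the discrete analogue of Theorem~\ref{teo3}(1) directly: averaging $L^K$ against the product-geometric measure with parameter $\us$ yields $L^O$ acting on the corresponding parameter function, so that the parameter measure evolves under $e^{tL^O}$, and invariance of $\nu^O$ then gives invariance of $\mu^K$. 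This reduces to the same two elementary identities (the Poissonization integral and the Beta integral), combined with the internal/boundary computations already carried out for the continuous generator in the proof of Theorem~\ref{T2}.
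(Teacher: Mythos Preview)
Your proposal is correct and takes essentially the same approach as the paper: couple the continuous KMP $\zeta(t)$ with a point-process $\kappa(t)$ on the intervals $[0,\zeta_i(t)]$, so that the counts $K_i(t)=|\kappa_i(t)|$ evolve as the discrete KMP, and invariance of $\mu^K$ follows from invariance of the joint law (which in turn follows from invariance of $\mu^\zeta$ and the Poisson-exponential-geometric identity you wrote).

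One small implementation difference worth noting: where you \emph{pool} the existing $K_i+K_j$ particles on the concatenated interval and then split at $U$, the paper instead attaches to each clock ring a \emph{fresh} family of uniforms $\Gamma(K_i+K_j)$ and resamples the particle positions before splitting. The paper's choice makes the Markov property of the count process $K(t)$ automatic, since each update depends only on $(K_i,K_j)$ and fresh marks; this is exactly the ``bookkeeping obstacle'' you flagged, and the resampling trick dissolves it without needing to propagate the conditional-uniformity invariant through every update. Your pooling version is of course equivalent once that invariant is established, but the resampling shortcut is cleaner.
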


Theorem \ref{tdkmp} is proven by coupling continuous and discrete KMP.

A pictorial and suggestive way to couple a configuration $K$ with law $\mu^K$ and a configuration $\zeta$ with law $\mu^\zeta$ using rate one Poisson processes is given by the following lemma, illustrated in Figure \ref{zk0}.
\\
\begin{lemma}[Coupling stationary discrete and continuous KMP]
  \label{lem11}
  Let $\cK=(\cK_i)_{i\in\oVV}$ be independent rate 1 Poisson processes in $\R$, $\zeta$ be distributed with $\mu^\zeta$, and $\cK$ and $\zeta$ independent. Defining $K_i:= |\cK_i\cap (0, \zeta_i)|$, we have that $K=(K_i)_{i\in\oVV}$ is distributed with $\mu^K$.
\end{lemma}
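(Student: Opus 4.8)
The plan is to condition on the common parameter vector $\us$ drawn from $\nu^O$, which appears simultaneously in the representation \eqref{iz1} of $\mu^\zeta$ and in the definition \eqref{iz2} of $\mu^K$, and thereby reduce the claim to a one-site identity. First I would record the two independence structures that make the problem factorize. On the one hand, since $\cK$ is independent of $\zeta$ and the $\cK_i$ are independent across $i\in\oVV$, conditionally on $\zeta$ the counts $K_i=|\cK_i\cap(0,\zeta_i)|$ are independent and each $K_i$ is Poisson distributed with mean $\zeta_i$ (the length of the interval times the unit rate). On the other hand, under $\mu^\zeta$ the coordinates $\zeta_i$ are, conditionally on $\us$, independent exponentials of mean $s_i$. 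Stacking these two layers of conditioning lets me treat each coordinate separately inside the $\nu^O$-average.

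The key computation is the single-site Poisson--exponential mixture: if $\zeta$ is exponential with mean $s$ and, given $\zeta$, $K$ is Poisson$(\zeta)$, then
\[
\PP(K=k)=\int_0^\infty \frac{\zeta^k e^{-\zeta}}{k!}\,\frac1s\, e^{-\zeta/s}\,d\zeta
=\frac{1}{s\,k!}\int_0^\infty \zeta^k e^{-\zeta(s+1)/s}\,d\zeta
=\Bigl(\frac{s}{s+1}\Bigr)^k\frac1{s+1},
\]
where the last equality uses $\int_0^\infty \zeta^k e^{-\lambda\zeta}\,d\zeta=k!/\lambda^{k+1}$ with $\lambda=(s+1)/s$. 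This is exactly the geometric weight with mean $s$ appearing in \eqref{iz2}.

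Finally I would assemble the pieces. Conditionally on $\us$, the coordinates $K_i$ are independent with $\PP(K_i=k_i\mid\us)=(s_i/(s_i+1))^{k_i}(s_i+1)^{-1}$, so for any test function $\varphi$,
\[
\EE\,\varphi(K)=\int \nu^O(d\us)\,\sum_{\uk}\varphi(\uk)\,\tprod_{i\in\oVV}\Bigl(\frac{s_i}{s_i+1}\Bigr)^{k_i}\frac1{s_i+1},
\]
which is precisely the defining formula \eqref{iz2} of $\mu^K$, proving $K\sim\mu^K$. The argument is a two-step conditioning combined with the displayed Poisson-to-geometric identity, so I do not expect a genuine obstacle; the only point requiring care is to keep the two independence structures straight---the independence of the $\cK_i$ across sites and the conditional independence of the $\zeta_i$ given $\us$---so that the single-site computation may legitimately be applied coordinatewise before integrating against $\nu^O$.
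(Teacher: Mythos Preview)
Your proof is correct and follows essentially the same approach as the paper: condition on the parameter vector $\us\sim\nu^O$, use the conditional product structure of $\mu^\zeta$ together with the independence of the Poisson processes to reduce to a single-site identity, and invoke the fact that a Poisson random variable with an exponentially distributed mean is geometric. The only difference is that you spell out the Poisson--exponential mixture integral explicitly, whereas the paper simply cites this identity.
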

\begin{proof} Let $\uk=\uk(\zeta,\cK)$ defined by $k_i:= |\cK_i\cap (0,\zeta_i)|$. Then,
  \begin{align}
    & \int \mu^\cK(d\cK) \mu^\zeta(d\zeta) \,\varphi\bigl(\uk(\zeta,\cK)\bigr)\notag\\
    & \qquad = \iint  \mu^\cK(d\cK)\,\nu^O(d\us) \Bigl(\tprod_{i\in\oVV} \int \frac1{s_i}
      e^{-\zeta_i/s_i} d \zeta_i \Bigr)\,\varphi(\uk(\zeta,\cK))
      = \int\,\mu^K(d\uk)\,\varphi(\uk),\notag
  \end{align}
  because a Poisson random variable with random exponential parameter has geometric distribution with the same mean as the exponential, an observation already noticed by \cite{KMP}. We have also used independence of $\zeta_i$ given $\us$ and the independence property of the Poisson process.
\end{proof}
{\centering
	\includegraphics[width=.8\textwidth, trim= 0 5cm 0 0, clip]{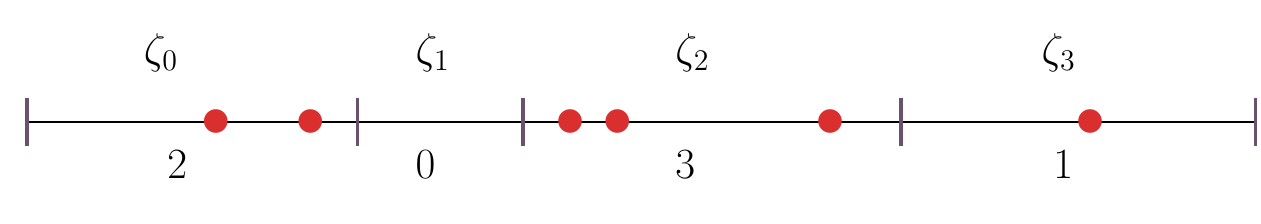}
	\captionof{figure}{Illustration of the coupling $(\zeta,\cK,K)$. The segments of length $\zeta_i$ are displayed successively. Red dots represent the points of the Poisson processes $\cK_i$ in the segments $(0,\zeta_i)$. The number of dots in the segment of length $\zeta_i$ is $K_i$. \label{zk0} }}

\paragraph{Coupling discrete and continuous KMP}

We define a coupling $(\zeta,\kappa,K)(t)=(\zeta_i(t),\kappa_i(t),K_i(t))_{i\in\oVV}$, where the $\zeta$ marginal is the continuous KMP \eqref{zetat1}, and $\kappa_i(t)$ consists of $K_i(t)$ points in $[0,\zeta_i(t)]$, and the marginal $(K_i(t))_{i\in\oVV}$ is the discrete KMP.

The construction of $\zeta_i(t)$ is performed as before. Consider the marked point process $(\cN,\cU,\cB)$, recalling that the marks in $\cU$ are uniform in $[0,1]$ and those in $\cB$ exponential$(1)$ random variables.  Let $\tau\in\cN_{ij}$ with associated $U$ and $B$ (if $j$ is in the boundary), denote $\zeta=\zeta(\tau-)$, and update using the recipe \eqref{ze2}:
\begin{align}
  \zeta_\ell(\tau)
  &:=
\begin{cases}
   U \bigl(\zeta_i + \zeta_{j}\bigr) &\ell=i\\
(1-  U) \bigl(\zeta_i + \zeta_{j}\bigr) & \ell=j\text{ and } j\in \VV\\
     BT_\ell&\ell=j\text{ and } j\in \pVV\\
 \zeta_\ell&\text{otherwise}.%
\end{cases}\label{ze5}
\end{align}
Notice that here we are not using the $(X(t),T(t))$ process to construct $\zeta(t)$.

To construct $\kappa(t)$ we need another two sets of marks. Denote
\begin{align}
  \cK:= \text{Poisson process of intensity 1 in }\R.
\end{align}
Denote $\Gamma= (\Gamma (1),\Gamma (2),\dots)$ a sequence of point processes defined by
\begin{align}
  \Gamma(n) &:= \text{Random set of }n\text{ independent points uniformly distributed in }[0,1].
\end{align}
To each Poisson event $\tau\in\cN_{ij}$ associate a fresh copy of $\Gamma$, and a fresh copy of $\cK$ if $j$ is in the boundary. Call $\cG$ and $\cP$ the respective collection of marks. All together, we get a marked Poisson process $(\cN,\cU,\cB,\cG,\cP)$.

Given a set of points  $Y \subset [0,1]$ and a point $u\in[0,1]$, denote
\begin{align}
 u\star Y &:= Y\cap[0,u]\\
  (1-u)\star Y &:= (Y\cap[u,1])-u ,
\end{align}
these are the set of points in $Y$ that are smaller than $u$, and the set of points in $Y$ bigger than $u$, as seen from $u$.

Let $\tau\in\cN_{ij}$ with associated marks $U,B,\Gamma,\cK$ (where $B$ and $\cK$ appear only if $j$ is in the boundary). Assume we know the configuration of the process until time $\tau-$. Denote  $\zeta= \zeta(\tau-)$, $\kappa=\kappa(\tau-)$ and $K_\ell:=|\kappa_\ell(\tau-)|$. Update $\zeta(\tau)$ using \eqref{ze3}, and
\begin{align}
 \kappa_\ell(\tau)
  &:=
\begin{cases}
  (\zeta_i+\zeta_j)\bigl(U\star\Gamma (K_i+K_j)\bigr) & \ell=i\\
(\zeta_i+\zeta_j)\bigl((1-  U)\star \Gamma (K_i+K_j)\bigr) & \ell=j\text{ and } j\in \VV\\
    \cK\cap[0,BT_\ell)&\ell=j\text{ and } j\in \pVV\\
 \kappa_\ell&\text{otherwise}.%
\end{cases}\label{ze4}
\end{align}
Given the initial condition $(\zeta(0),\kappa(0))$, the process $(\zeta(t),\kappa(t))_{t\ge0}$ governed by the marked Poisson process $(\cN,\cU,\cB,\cP,\cY)$ is defined by \eqref{ze3}, \eqref{ze4} and
\begin{align}
  \label{zk11}
  (\zeta(t),\kappa(t)):=\bigl(\zeta(\tau^t),\kappa(\tau^t)\bigr),
\end{align}
where $\tau^t$ is defined in \eqref{tau1}.

The recipe \eqref{ze4} updates point processes $\kappa_i\subset[0,\zeta_i]$.
When $\tau\in\cN_{ij}$ and $j$ is not in the boundary,  $\zeta_i,\zeta_j$ go to $U(\zeta_i+\zeta_j),(1-U)(\zeta_i+\zeta_j)$; simultaneously, the points in $\kappa_i$ and $\kappa_j$ are remixed together in $[0,\zeta_i+\zeta_j]$, this segment is partitioned into $[0,U(\zeta_i+\zeta_j)]$ and $[U(\zeta_i+\zeta_j),\zeta_i+\zeta_j]$, and the remixed points in those intervals go to the updated $\kappa_i$ and $\kappa_j$, respectively. When $j$ is in the boundary, the $i$ vertex is updated as before (remix before updating, partition the interval and take the point process of the first interval), but $j$ gets a fresh point process, consisting of the Poisson process $\cK$ restricted to the interval $[0,BT_j]$. Consequently, the number of points assigned to the updated $\kappa_j$ has geometric distribution with mean $T_j$. Only the boundary $T_{\pVV}$ show up in this evolution. The next proposition follows immediately from the construction.\\
\begin{proposition}
  Denote $K(t):= |\kappa(t)|$. The process $(\zeta(t),K(t))$ is a coupling between the continuous and discrete KMP processes. More precisely, the marginal processes are Markov with generators $L^\zeta$ and $L^K$, respectively.
\end{proposition}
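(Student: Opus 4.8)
The plan is to verify that the coupled process $(\zeta(t),\kappa(t))$ built by the recipe \eqref{ze5}--\eqref{ze4} projects, on each coordinate block, to a genuine Markov chain with the advertised generator. For this it suffices to read off, at a generic clock ring $\tau\in\cN_{ij}$, the conditional law of the update of $(\zeta,K)$ given the state at time $\tau-$, and to check that the $\zeta$-update depends on the current state only through $\zeta$ (so that $\zeta$ is Markov with $L^\zeta$), while the law of the $K$-update depends on the current state only through the cardinalities $K_i,K_j$ and the boundary parameter $T_j$ (so that $K=|\kappa|$ is Markov with $L^K$). Since the marks $U,B,\Gamma,\cK$ attached to distinct events of $\cN$ are fresh and mutually independent, the Markov property for each marginal is then automatic.

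The $\zeta$ marginal requires no new work: recipe \eqref{ze5} coincides verbatim with the continuous KMP update \eqref{ze2}, with $U$ uniform and $B$ exponential$(1)$ fresh at each ring, so $\zeta(t)$ is the continuous KMP process with generator $L^\zeta$ and boundary conditions $T_{\pVV}$.

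For the $K$ marginal the only real content is a combinatorial distributional identity. At an internal ring $\tau\in\cN_{ij}$, $j\in\VV$, the recipe \eqref{ze4} discards the old locations in $\kappa_i\cup\kappa_j$ and resamples $n:=K_i+K_j$ fresh points via $\Gamma(n)$, i.e.\ $n$ i.i.d.\ uniforms on $[0,1]$, independent of $U$. Hence $K_i(\tau)=|U\star\Gamma(n)|$ is the number of these $n$ points lying below $U$. By exchangeability of the $n+1$ i.i.d.\ uniform variables $\{U\}\cup\Gamma(n)$, the rank of $U$ is uniform on $\{1,\dots,n+1\}$, so $K_i(\tau)$ is uniform on $\{0,1,\dots,n\}$ and $K_j(\tau)=n-K_i(\tau)$. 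Crucially, because the points are resampled rather than reused, this law depends on $(\zeta,\kappa)(\tau-)$ only through $n=K_i+K_j$, and it is exactly the internal transition $H_{ij;h}$ of \eqref{genK1} with $h$ chosen uniformly in $\{0,\dots,K_i+K_j\}$.

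At a boundary ring $\tau\in\cN_{ij}$, $j\in\pVV$, the $i$-coordinate is updated by the same rule, so $K_i(\tau)$ is again uniform on $\{0,\dots,K_i+K_j\}$, while $\kappa_j(\tau)=\cK\cap[0,BT_j)$ gives $K_j(\tau)=|\cK\cap[0,BT_j)|$. Since $\cK$ has rate $1$ and $B$ is exponential$(1)$, $K_j(\tau)$ is a Poisson count over an interval of exponential-length with mean $T_j$, hence geometric with mean $T_j$ and independent of everything else --- the same elementary fact already used in Lemma \ref{lem11}. This reproduces the boundary transition of \eqref{genK}. Collecting the two cases and invoking the freshness and independence of the marks, the increments of $K$ depend only on the current value of $K$, so $K=|\kappa|$ is Markov with generator $L^K$, completing the proof. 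The only step needing genuine care is the exchangeability identity $|U\star\Gamma(n)|\sim\mathrm{Unif}\{0,\dots,n\}$ together with the observation that resampling makes this count independent of the continuous values $\zeta$; everything else is bookkeeping.
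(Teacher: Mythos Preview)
Your argument is correct and matches the paper's approach: the paper simply states that the proposition ``follows immediately from the construction,'' and you have supplied the details that make this immediate claim explicit---in particular the exchangeability argument showing $|U\star\Gamma(n)|$ is uniform on $\{0,\dots,n\}$, and the Poisson-over-exponential computation giving the geometric boundary law. There is nothing to correct.
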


\paragraph{One dimensional illustration of the coupling}
We consider a small one dimensional lattice with $\VV=\{2,3\}$ and $\pVV=\{1,4\}$.
The evolution of internal vertices is illustrated in Figure~\ref{zk1}. Assume $\tau\in\cN_{23}$, and that the configuration at time $\tau-$ is given in the upper picture. Then, erase the vertical bar separating the segments of sizes $\zeta_2$ and $\zeta_3$ obtaining a segment of size $\zeta_2+\zeta_3$. Resample uniformly the red dots in this segment, obtaining $K_1+K_2$ points in the middle picture. Now choose uniformly a new vertical bar in the segment of size $\zeta_2+\zeta_3$, creating two new segments of sizes $\zeta'_2$ and $\zeta'_3$ and point processes $\cK'_2$, $\cK'_3$, with  $K'_2$ and $K'_3$ points, respectively; the remaining segments are not updated and we obtain the bottom picture at time $\tau$.

{{\centering
	\includegraphics[width=.8\textwidth]{zk1.pdf}
	\captionof{figure}{Illustration of the coupling $(\zeta,\kappa,K)$ in the graph with vertices $\{0,1,2,3\}$, boundary $\{0,3\}$ and nearest neighbor edges.  The $i$-th interval has length $\zeta_i$, and the number below the interval indicates the value of $K_i$. 
          \label{zk1} }}
      \vspace {4mm}

The evolution of the boundary edge is illustrated in Figure \ref{zk3}. Assume $\tau\in\cN_{34}$, where $4$ is a boundary vertex. In the middle picture the bar separating $\zeta_3$ and $\zeta_4$ is erased and the particles of $\kappa_3$ and $\kappa_4$ are remixed. In the bottom line a new separating bar has been added uniformly in the interval of size $\zeta_3+\zeta_4$, $\zeta'_3$, $\kappa'_3$ and $K'_3$ are obtained as in the previous case. The segment of size $\zeta_4$ is substituted by a fresh exponential of mean $T_4$, denoted $\zeta'_4$, the points of $\kappa_4$ are also substituted by $\kappa'_4$, a fresh Poisson Process of rate 1 intersected with the segment of length $\zeta'_4$. Points in $\kappa'_4$ are painted blue. $K'_i$ represent the $K$ configuration after the iteration.

\begin{center}
  	\includegraphics[width=.8\textwidth]{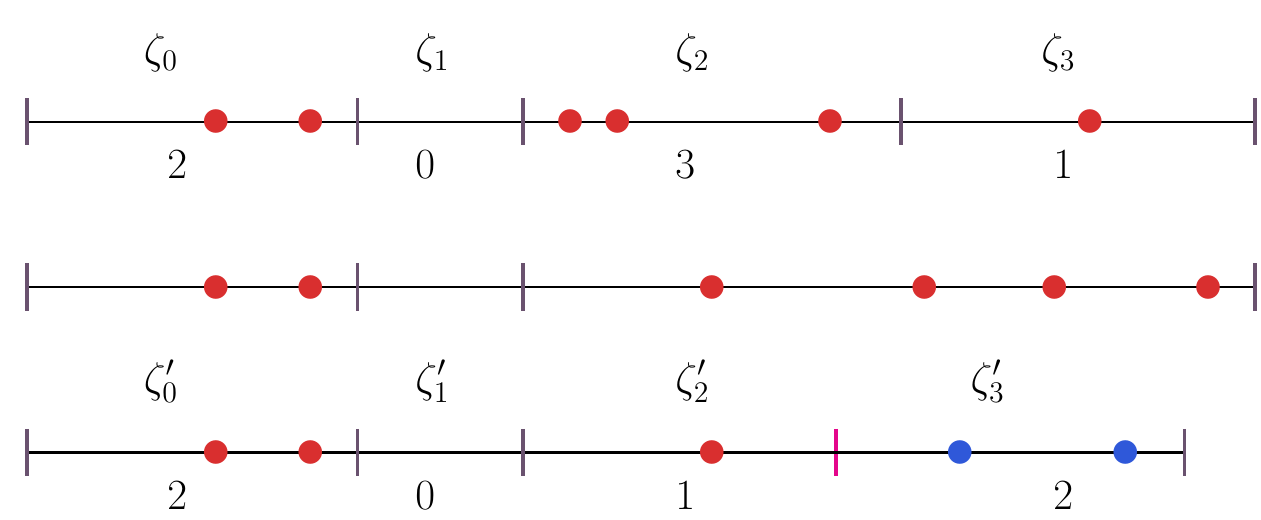}
	\captionof{figure}{Illustration of the coupling $(\zeta,\kappa,K)$ in the same graph as in Fig.\/\ref{zk1} for the boundary edge $23$.
          \label{zk3}}
\end{center}

\paragraph{Invariant measure for the $(\zeta,\kappa)$ process}
Call $\mu^\cP$ the law of $\cP=(\cK_i)_{i\in\oVV}$, iid Poisson processes with parameter $1$ on~$\R$, and recall the invariant measure $\mu^\zeta$ for $\zeta$. Define $\mu^{\zeta,\kappa}(d\zeta,d\kappa)$ as the measure acting on test functions $\varphi:\R_+^{\VV}\times \N_0^{\VV}\to \R$ by
\begin{align}
  \label{zk31}
 \int \mu^{\zeta,\kappa}(d\zeta,d\kappa) \,\varphi(\zeta,\kappa) := \iint \mu^\cK(d\cK)\,\mu^\zeta(d\zeta)\, \varphi\bigl(\zeta,(\cP_i\cap[0,\zeta_i])_{i\in\oVV}\bigr).
\end{align}
Since the product of Poisson processes in intervals are conserved by the operations ``remix and redistribute'' and ``substitute a Poisson random variable by another with the same mean'', and $\mu^\zeta$ is invariant for the $\zeta$ marginal, it is not hard to show the following proposition, which implies Theorem \ref{tdkmp}.
\\
\begin{proposition}[Invariant measure for the $(\zeta,\kappa)$ process]
  \label{izk1}
  The measure $\mu^{\zeta,\kappa}$ defined in \eqref{zk31} is invariant for the coupled process $(\zeta(t),\kappa(t))$ defined by \eqref{ze3},\eqref{ze4} and \eqref{zk11}.
\end{proposition}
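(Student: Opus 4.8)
The plan is to exploit the product-Poisson structure of $\mu^{\zeta,\kappa}$ and reduce the claim to two elementary facts about Poisson processes together with the already-established invariance of $\mu^\zeta$ for the $\zeta$-marginal (Theorem \ref{teo3}). By \eqref{zk31}, the measure $\mu^{\zeta,\kappa}$ is characterized by two properties: its $\zeta$-marginal is $\mu^\zeta$, and, conditionally on $\zeta$, the point processes $(\kappa_i)_{i\in\oVV}$ are mutually independent, each $\kappa_i$ being a rate-one Poisson process on $[0,\zeta_i]$ (this is the static picture of Lemma \ref{lem11}). In the coupled dynamics the $\zeta$-coordinate evolves autonomously as the continuous KMP process, since its updates never read off $\kappa$; hence its law stays $\mu^\zeta$ for all times. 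It therefore suffices to show that the conditional law of $\kappa(t)$ given $\zeta(t)$ remains the product of rate-one Poisson processes on the intervals $[0,\zeta_i(t)]$.

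The verification rests on two standard properties of Poisson processes. First, \emph{concatenation}: if $\kappa_i,\kappa_j$ are independent rate-one Poisson processes on $[0,\zeta_i]$ and $[0,\zeta_j]$, then $K_i+K_j$ is Poisson of mean $\zeta_i+\zeta_j$ and, given this total count, the merged points are i.i.d.\ uniform; so redistributing the $K_i+K_j$ points uniformly over $[0,\zeta_i+\zeta_j]$, exactly the remix encoded by $\Gamma(K_i+K_j)$ in \eqref{ze4}, reproduces a rate-one Poisson process on $[0,\zeta_i+\zeta_j]$. Second, \emph{restriction}: cutting a rate-one Poisson process on $[0,\zeta_i+\zeta_j]$ at $U(\zeta_i+\zeta_j)$ yields two independent rate-one Poisson processes on the two pieces. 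Since after the update $\zeta_i(\tau)=U(\zeta_i+\zeta_j)$ and $\zeta_j(\tau)=(1-U)(\zeta_i+\zeta_j)$, this is precisely the split in \eqref{ze4}, and it returns independent rate-one Poisson processes on $[0,\zeta_i(\tau)]$ and $[0,\zeta_j(\tau)]$. At a boundary edge the vertex $i$ is treated identically, while $\kappa_j(\tau)=\cK\cap[0,BT_j)$ is a fresh rate-one Poisson process on $[0,\zeta_j(\tau)]$ with $\zeta_j(\tau)=BT_j$, which is the required conditional law at the boundary.

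To assemble these I would condition on the whole trajectory $(\zeta(s))_{0\le s\le t}$. Because the coupling is one-directional, this trajectory determines every split mark $U$ and every boundary length $B$, whereas the remix points $\Gamma$ and the fresh boundary processes $\cK$ driving $\kappa$ are independent of it. One then inducts over the finitely many events of $\cN$ in $[0,t]$: if at time $\tau-$ the $(\kappa_\ell(\tau-))_\ell$ are conditionally independent rate-one Poisson on the $[0,\zeta_\ell(\tau-)]$, the two facts above give the same conclusion at time $\tau$ for the two coordinates touched by the event, the untouched coordinates staying unchanged and independent. Passing from conditioning on the trajectory to conditioning on $\zeta(t)$ alone is immediate, since the resulting conditional law depends on the trajectory only through $\zeta(t)$; combined with $\zeta(t)\sim\mu^\zeta$ this yields $(\zeta(t),\kappa(t))\sim\mu^{\zeta,\kappa}$, which is the asserted invariance.

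The main obstacle is the bookkeeping of independence rather than any of the Poisson identities themselves. One must check that after the remix the merged Poisson process is genuinely independent of the split location $U$, so that the restriction property may be applied with a random cut, and that the fresh marks $\Gamma,\cK$ are independent of the $\sigma$-field generated by the $\zeta$-trajectory. Both hold because $\Gamma$ and $\cK$ are attached to $\tau$ as fresh marks while the count $K_i+K_j$ is measurable with respect to $\kappa(\tau-)$; making this precise, and confirming that conditioning on the future of the $\zeta$-trajectory does not perturb the conditional law of $\kappa(\tau-)$, is where the care lies.
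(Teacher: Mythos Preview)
Your proposal is correct and follows exactly the approach the paper sketches. The paper does not give a detailed proof of this proposition; it simply remarks that the product of rate-one Poisson processes on the intervals $[0,\zeta_i]$ is preserved by the ``remix and redistribute'' operation and by the boundary refresh, and that $\mu^\zeta$ is invariant for the $\zeta$-marginal---precisely the two ingredients you spell out and assemble by induction over the Poisson events.
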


\section*{Acknowledgments}
We warmly thank Errico Presutti for constant input and inspiration.

This project started when the first author was visiting Gran Sasso Scientific Institute and Università de L'Aquila in 2022,  and partially developed during the stay of the third author in Universidad de Buenos Aires in the same year. We thank both institutions for warm hospitality and support.

PAF is supported by Consejo Nacional de Investigaciones Científicas y Técnicas, Universidad de Buenos Aires and Funda\c cão de Apoio à Pesquisa do Estado de São Paulo, Neuromat project. 
DG acknowledge financial support from the Italian Research Funding Agency (MIUR) through
PRIN project ``Emergence of condensation-like phenomena in interacting particle systems: kinetic and lattice models'', grant 202277WX43.

\section*{Declarations}
The authors have no conflicts of interest. Data sharing is not applicable to this article as no datasets were generated or analysed during the current study.



\end{document}